\documentclass[11pt]{article}

\usepackage[utf8]{inputenc}

\usepackage[margin=1in]{geometry}

\usepackage[numbers]{natbib}

\usepackage{graphicx} 

\usepackage{amsmath,amsthm,amssymb,hyperref}
\usepackage{physics}
\usepackage{amsfonts}
\usepackage{graphicx}
\usepackage{mathdots}
\usepackage{comment, dsfont } 
\usepackage{ mathrsfs }
\usepackage{ verbatim }
\usepackage{xcolor}
\usepackage{paracol}
\usepackage{amssymb}
\usepackage{hyperref}
\usepackage{cleveref}

\usepackage{thmtools}
\usepackage{thm-restate}
 
\usepackage{mathtools}
\usepackage{subcaption} 
\usepackage{authblk}

\usepackage{tikz}
\usetikzlibrary{decorations.markings, arrows.meta}

\def\tsc#1{\csdef{#1}{\textsc{\lowercase{#1}}\xspace}}

\tsc{WGM}
\tsc{QE}
\tsc{EP}
\tsc{PMS}
\tsc{BEC}
\tsc{DE}

\newcommand{\caputo}[3][0]{{}_{#1}^{C}\! D^{#2}_{#3}} 
\newcommand{\RL}[3][0]{{}_{#1}^{RL}\! D^{#2}_{#3}}
\newcommand{\J}[3][0]{{}_{#1}\! J^{#2}_{#3}}

\newcommand{\R}{\mathbb{R}}
\newcommand{\C}{\mathbb{C}}
\newcommand{\Z}{\mathbb{Z}}

\newcommand{\El}{\mathcal{L}}
\newcommand{\Ef}{\mathcal{F}}

\newtheorem{theorem}{Theorem}
\newtheorem{lemma}[theorem]{Lemma}

\theoremstyle{definition}
\newtheorem{definition}[theorem]{Definition}

\theoremstyle{remark}
\newtheorem{rmk}{Remark}

\begin{document}
\let\WriteBookmarks\relax
\def\floatpagepagefraction{1}
\def\textpagefraction{.001}


\title{Anomalous diffusion memory factorization: Characteristic timescales and application to inverse problem}


\author[1,2]{William Cvetko \thanks{wcvetko@umd.edu}}
\author[2]{Elena Cherkaev\thanks{elena@math.utah.edu}}
\affil[1]{University of Maryland, Department of Physics}
\affil[2]{University of Utah, Department of Mathematics}

\maketitle

\begin{abstract}
Memory effects and anomalous diffusion arising in models of transport in complex media, fractals, and viscoelastic materials can be described by fractional-differential equations, such as the time-fractional diffusion equation \(D^\alpha_t u -  \Delta_x u = 0\).
The paper develops a decomposition of solutions of these equations into a product of spatial and temporal components, corresponding to  a freeze-out at long times.
For fractional diffusion with the Caputo derivative
the spatial factor is the inverse-Laplacian $(-\Delta)^{-1}$ of the initial data, while for the Riemann-Liouville derivative
it is the inverse bi-Laplacian $ (-\Delta)^{-2}$. In both cases, the temporal factor of the solution is a scaled negative power of time. This memory artifact explicitly encodes the initial data, which  gives a simple and robust way to reconstruct the initial conditions in the backward-in-time inverse problem with solution values measured at long times.
We derive characteristic timescales for Mittag-Leffler functions, which correspond to such factorization in anomalous diffusion. This also enables an accurate approximation of the number of real zeros of the Mittag-Leffler function. 
We apply these results to heat transfer on a comb, a model which manifests subdiffusion arising from the comb's fractal structure.

\end{abstract}

\section{Introduction}

Anomalous diffusion equations model transport phenomena in complex hierarchical networks and fractal structures, from flow within percolation clusters and porous rocks \cite{bakuninMultiscalePercolationScaling2004,raghavanFractionalDerivativesApplication2011} to homogenized multiscale media \cite{benarousMultiscaleHomogenizationBounded2003,armstrongAnomalousDiffusionFractal2025} and continuous-time random walks \cite{metzlerRandomWalksGuide2000}. 
A particular example that we explore is transport within a comb fractal medium, which has found application across different fields serving as a model for the motion of ions along spiny dendrites in nerve cells \cite{mendezComblikeModelsTransport2013}, fluid flow in a porous medium \cite{arkhincheevAnomalousDiffusionDrift1991}, 
or spreading of wave turbulence 
\cite{milovanovTurbulenceSpreadingAnomalous2025} among others \cite{iominFractionalDynamicsComblike2018,sandevHeterogeneousDiffusionComb2018}.
In analyzing these fractional models an important role is played by eigenfunctions of fractional derivatives \cite{grigolettoLinearFractionalDifferential2018,cvetkoConvolutiontosumIdentitiesMittagLeffler2026}, which are characterized by Mittag-Leffler functions
\cite{mainardiFractionalRelaxationoscillationFractional1996,mainardiWhyMittagLefflerFunction2020,mainardiWhyMittagLefflerFunction2020,hauboldMittagLefflerFunctionsTheir2011,gorenfloMittagLefflerFunctionsRelated2014,gorenfloRecentAdvancesTheory2009,mieghemMittagLefflerFunction2021}. 
In particular, each Fourier mode of time-fractional diffusion solutions evolves according to a Mittag-Leffler function in time.
%
%
Though analogous in many respects to exponentials and sinusoids, Mittag-Leffler functions are distinctive in that they can have \textit{long tails} \cite{mainardiPropertiesMittagLefflerFunction2014,mainardiMittagLefflertypeFunctionsFractional2000}. The Mittag-Leffler function,
\[
e_{\alpha}(t):=E_{\alpha}(-t^\alpha) := \sum_{n=0}^\infty \frac{(-t^\alpha)^n}{\Gamma( \alpha n+1)},
\]
   asymptotically approaches $\frac{t^{-\alpha}}{\Gamma(1-\alpha)}$, decaying according to a power law.
   This behavior is well-known for $\alpha \in (0,1)$, it also occurs for $\alpha \in (1,2)$, and for the two-parameter generalization 
   \[
   e_{\alpha,\beta}(t) := t^{\beta-1}E_{\alpha,\beta}(-t^\alpha) := t^{\beta-1} \sum_{n=0}^\infty \frac{(-t^\alpha)^{n}}{\Gamma(n\alpha+\beta)}
   \]
   asymptotically approaching $\frac{t^{-\alpha+\beta}}{\Gamma(-\alpha+\beta)}$ \cite{parisAsymptoticsMellinBarnesIntegrals2001,erdelyiMiscellaniousFunctions1955}. 
Owing to these long tails, solutions to the anomalous diffusion equation (for domain $\Omega \subseteq \R^n$)
 \begin{equation}
 	D^\alpha_t u -  \Delta u = 0 \label{eq:really_general_anom_diff} \qquad \alpha\in(0,1)\cup(1,2), \qquad (x,t) \in \Omega \cross (0,\infty),
 \end{equation}
have notable asymptotic properties not found in standard diffusion \cite{liAsymptoticsSolutionsSuperdiffusion2023,liInitialboundaryValueProblems2023,chengAsymptoticBehaviorSolutions2017,maAsymptoticsSolutionsAnomalous2013,metzlerRandomWalksGuide2000,kimAsymptoticBehaviorsFundamental2016}, including bounds such as
\begin{equation}     \Vert u(\cdot,t) \Vert < C t^{-\alpha}.\end{equation}
 Additionally, this long-tailed algebraic decay makes the inverse problem less ill-posed than for the standard heat equation \cite{jinTutorialInverseProblems2015,sakamotoInitialValueBoundary2011,floridiaWellposednessBackwardProblems2020,alimovBackwardProblemsTime2021,liuBackwardProblemTimefractional2010,floridiaBackwardProblemsTime2020,zhangNumericalAnalysisBackward2020}.

In this paper we demonstrate that solutions to the anomalous diffusion equation,  \cref{eq:really_general_anom_diff}, for both Caputo and Riemann-Liouville derivatives, 
$\alpha \in (0,1)$ and $\alpha \in (1,2),$ all experience a form of 'freeze-out', where the solution factorizes into space and time components as $t\rightarrow \infty$.
In each case, the spatial component depends only on the initial data and geometry of the domain, while the time component depends only on the order of the time derivative  $\alpha$.
For example, in the Caputo case with $\alpha\in (0,1)$ and initial conditions $u(x,t=0) = v(x) $, the solution can be represented as
\begin{equation}
    u(x,t)\approx \frac{t^{-\alpha}}{\Gamma(1-\alpha)} (-\Delta)^{-1}[v](x),
\end{equation}
where the inverse Laplacian is defined by
\[(-\Delta)^{-1}[f](x) = \int _\Omega f(y) K(x,y) dy , \] and $K$ is the Green's function for Poisson's equation in the domain $\Omega$.
 This factorization of the solution can be exploited for the reconstruction of the initial data for the backward-in-time inverse problem, even at asymptotically large times.
 
The present work is organized as follows. In \cref{section:definitions} we define and derive the solutions to the time-fractional diffusion equation, including subdiffusion ($0<\alpha<1$) and  superdiffusion ($1<\alpha<2$) for both Caputo and Riemann-Liouville types of fractional derivative, and different types of initial data.  
In \cref{section:asymptotics} we derive and justify the 
long-time factorized form of these solutions, provide a simple formula for recovering the initial data at asymptotic times, and show the mechanism behind the form they take (including how these results generalize for anomalous diffusion with various memory kernels). In \cref{section:Characteristic_time_scales} we use a different representation of the Mittag-Leffler function to derive characteristic timescales. In \cref{section:zereos_of_ml} we use characteristic times in a novel method for counting the real zeros of the Mittag-Leffler function $e_{\alpha,\beta}(t)$ for a given $\alpha,\beta.$
In \cref{section:comb_diffusion} we explore
diffusion on a comb, in which both Caputo and Riemann-Liouville subdiffusion describe aspects of transport in a fractal structure.


\section{Basics of anomalous diffusion} \label{section:definitions}
This paper considers solutions to the time-fractional diffusion equation,
\begin{equation} \label{eq:bare_diffusion_equation}
    D^\alpha_t u(t,x) - \Delta_x u(t,x) = 0 \qquad 0<\alpha<2
\end{equation}
with associated initial data and boundary conditions. 
There are multiple conventions for fractional derivatives, with distinct solutions and types of initial data corresponding to the particular fractional derivative in use \cite{hilferFractionalDiffusionBased2000,mainardiFractionalRelaxationoscillationFractional1996,metzlerRandomWalksGuide2000}. The two formulations of fractional derivative used in this paper, Caputo ($\caputo[]{\alpha}{t})$ and Riemann-Liouville ($\RL[]{\alpha  }{t}$), each rely on the Riemann-Liouville fractional integral:
 \begin{definition}
\label{def:fractionalIntegral}
Provided it converges, the order-$\nu \, (\nu>0)$ Riemann-Liouville fractional integral of $f(t)$
is defined as
\begin{equation}
    \J[]{\nu}{} [f](t) :=  \frac{1}{\Gamma(\nu)} \int _{0}^t \!\!f(z) (t-z)^{\nu-1} dz \label{eq:fractionalIntegral}
\end{equation}
For $\nu=0$, $J^\nu$ is taken to be the identity operator.
\end{definition}
The Caputo and Riemann-Liouville derivatives of order $\alpha=n-\nu$ are defined by composing an order-$n$ derivative with an order-$\nu$ integral, differing by the order of composition.

 \begin{definition}
 \label{def:derivativeRL}
Provided the fractional integral converges, the order-$\alpha$ ($\alpha>0$) Riemann-Liouville and Caputo fractional derivatives of $f(t)$ are respectively defined as
     \begin{align}
    \RL[]{\alpha}{}[f](t) :=& D^{\lceil \alpha \rceil} \J[]{\lceil\alpha\rceil-\alpha}{} [f](t)
    \label{eq:derivativeRL}
    \\
    \caputo[]{\alpha}{}[f](t) :=&  \J[ ]{\lceil\alpha\rceil-\alpha}{} \left [D^{\lceil \alpha \rceil}f \right](t),
    \label{eq:derivativeCaputo}
\end{align}
where $\lceil\alpha\rceil$ is the ceiling of $\alpha$ ($\alpha$ rounded up to the next integer). For $\alpha\leq0$, both derivatives are taken to be $\J[]{ -\alpha}{}[f](t).$
\end{definition}

Fractional differential equations (FDEs) with Caputo and Riemann-Liouville derivatives admit different types of initial data. Loosely speaking, solutions to Caputo FDEs are specified by their initial value and (integer-order) derivatives, while solutions to Riemann-Liouville FDEs require a form of fractional initial data (see \cite{heymansPhysicalInterpretationInitial2006}), and can diverge as $t\rightarrow0^+$.

For $\alpha\in (0,1)$, \cref{eq:bare_diffusion_equation} describes \textit{subdiffusion}, and \textit{superdiffusion} for $\alpha\in (1,2)$.  Owing to the second-order time derivative embedded in $D^\alpha_t$ for $\alpha\in(1,2)$, for superdiffusion (like for the wave equation) there are two sets of initial data to be specified, as opposed to the one set of initial data to be specified for subdiffusion. To handle these distinctions, we now label four solutions to the fractional diffusion equation for the rest of the paper--corresponding to the two formulations of fractional derivative (Caputo/Riemann-Liouville) with two possible types of initial data for each.

\begin{table}[]

    \label{tab:u_xi}
   \renewcommand{\arraystretch}{1.25}
\begin{tabular}{|c|c|c|c|c|}
     \hline
      $\gamma$    &Type    &   Range of $\alpha$   & Initial condition &  Other initial condition
    \\ \hline 
     $0$ &Caputo & $(0,1)$ &  $u_{0}(x,0)=v_0(x)$   & \text{N/A}
     \\ \hline 
   $\alpha-1$ & RL & $(0,1)$ &  $\RL[]{\alpha-1}{t} [u_{\alpha-1}](x,0)=v_{\alpha-1}(x)$  & \text{N/A}
    \\ \hline 
    $1$ & Caputo & $(1,2)$ &  $\partial_tu_{1}(x,0)=v_1(x)$ & $u_{1}(x,0) \equiv 0$
     \\ \hline 
     $0$ &Caputo & $ (1,2)$ &  $u_{0}(x,0)=v_0(x)$   &  $\partial_t u_0(x,0) \equiv0$
   \\ \hline 
   $\alpha-1$ & RL & $ (1,2)$ &  $\RL[]{\alpha-1}{t} [u_{\alpha-1}](x,0)=v_{\alpha-1}(x)$  & $\RL[]{\alpha-2}{t} [u_{\alpha-1}](x,0) \equiv0 $
   \\ \hline 
    $\alpha-2$ & RL & $(1,2)$ &  $\RL[]{\alpha-2}{t} [u_{\alpha-2}](x,0)=v_{\alpha-2}(x)$ & $ \RL[]{\alpha-1}{t} [u_{\alpha-2}](x,0)\equiv 0  $
     \\ \hline 
    \end{tabular}
    \caption{A summary of the cases of time-fractional diffusion considered in this paper, as specified in \cref{def:anom_diff_u_xi}.}
\end{table}

\begin{definition} \label{def:anom_diff_u_xi}
    With $\alpha \in (0,1) \cup (1,2)$, and for subscript $\gamma \in \{ 0,1,\alpha-1,\alpha-2\}$, $u_{\gamma}(x,t)$ denotes the unique solution to \begin{equation} \label{eq:anom_diff_u_xi}
       \left \{ 
       \begin{aligned}& 
           D_{t}^\alpha u_{\gamma} -  \Delta u_{\gamma} = 0 \quad & (x,t) \in \Omega\cross (0,\infty)
           \\
           &D^\gamma_t u_{\gamma} |_{t=0_+} = v_{\gamma}(x) & 
           \\
            &\mathcal{B}[u_\gamma] = 0  & \text{on } \partial \Omega \cross (0,\infty),
       \end{aligned}\right.
    \end{equation}
  where $D^\gamma$ are Riemann-Liouville derivatives for $\gamma=\alpha-1,\alpha-2$ (for $\gamma=0,1$ there is no ambiguity).  Further, assume $v_\gamma \in L_2(\Omega)$. For $\alpha>1$, any unspecified initial data is taken to be zero. Though $\gamma$ takes four values, there are actually six distinct cases, summarized in \cref{tab:u_xi}.
  
If $\Omega$ is bounded, we assume the boundary conditions ($\mathcal B [u_\gamma]\vert_{\partial \Omega}= 0 $) are such that  $-\Delta $ has strictly positive eigenvalues. Here we assume that $v_\gamma\in L_2(\Omega)$, and the initial data condition  $D^\gamma_t u_{\gamma} |_{t=0} = v_{\gamma}(x)$ is satisfied in $L_2(\Omega)$ 
in the $t\rightarrow0$ limit.

\end{definition}
If $\alpha\in (1,2)$ and one considers the problem with both types of initial data, they can simply be added together by linearity of \cref{eq:anom_diff_u_xi}. For $\alpha \in (0,1)$, $u_{1}$ and $u_{\alpha-2}$ are not even defined.

We introduce Mittag-Leffler functions:
\begin{definition} \label{def:mittag_leffler_2_param}
For $z\in \C$, $\alpha \in \R_{>0}$, $\beta \in \R$, the two-parameter Mittag-Leffler function is
    \begin{equation}
        E_{\alpha,\beta}(z) := \sum_{n=0}^\infty \frac{z^n}{\Gamma(\alpha n + \beta)}.
    \end{equation}
    \end{definition}
    \begin{rmk} Formally, the integral which defines
$\Gamma(y)$ does not converge for $ y \leq0$, but the function can be analytically continued into that region, with poles at the negative integers. Due to those poles, we interpret hereafter $\frac{1}{\Gamma(-j)} (j\in\Z_{\geq0})$ as being zero.
\end{rmk}

\begin{definition}
For $\alpha,t, \lambda \in \R_{>0}$,  $\beta \in \R$, we denote a parameterization of the two-parameter Mittag Leffler function as
\begin{align}
       e_{\alpha,\beta}(t;\lambda) =& t^{\beta-1} E_{\alpha,\beta}(-\lambda t^\alpha) = \sum_{n=0}^\infty \frac{(-\lambda)^n t^{\alpha n + \beta-1}}{\Gamma(\alpha n + \beta)} ,\label{eq:series_def_e_ab_lambda}
\end{align}
 which has a Laplace transform of 
\begin{equation} \label{eq:laplace_of_e}
    \El \left[e_{\alpha,\beta}(t;\lambda)\right](s) = \frac{s^{\alpha-\beta}}{s^\alpha+\lambda} \qquad (\beta<\alpha+1).
\end{equation}
For $\lambda=1$ we omit the parameter $\lambda$, so $e_{\alpha,\beta}(t;1)  := e_{\alpha,\beta}(t).$
\end{definition}

\begin{rmk} 
There are other specialized parameterizations of Mittag-Leffler functions  \cite{kilbasTheoryApplicationsFractional2006,lorenzoGeneralizedFunctionsFractional2008,garrappaNumericalEvaluationTwo2015,cvetkoConvolutiontosumIdentitiesMittagLeffler2026}. In this work we follow the convention used by Mainardi and Gorenflo in \cite{mainardiFractionalCalculusWaves2010,gorenfloFractionalCalculusIntegral2008,mainardiMittagLefflertypeFunctionsFractional2000,gorenfloMittagLefflerFunctionsRelated2014}. We will often refer to $e_{\alpha,\beta}(t;\lambda)$ as a 'Mittag-Leffler function', which we hope will not cause ambiguity.
\end{rmk}

The Mittag-Leffler functions $e_{\alpha,\beta}(t)$ and $e_{\alpha,\beta}(t;\lambda)$, are related through
\begin{equation} \label{eq:relate_e_e_lambda}
    e_{\alpha,\beta}(t;\lambda) = t^{\beta-1} E_{\alpha,\beta}(-\lambda t^\alpha) =  \lambda^{ \frac{1-\beta}{\alpha}} e_{\alpha,\beta}(\lambda^{1/\alpha}t).
\end{equation}

The Mittag-Leffler function provides the solutions to the fundamental fractional differential equation for both Caputo and Riemann-Liouville derivatives \cite{cvetkoConvolutiontosumIdentitiesMittagLeffler2026}.
\begin{lemma}
\label{lemma:solution_to_fundamental_equation}
    For $\alpha,t,\lambda>0$, the solution to 
    \begin{equation}
        \left \{  \begin{aligned}
            \caputo[]{\alpha}{t} [f](t) =& -\lambda f(t)
            \\
           \frac{d^\ell f}{dt^\ell}\bigg \vert _{t=0} =& b_{\ell} \qquad \text{for } \ell \in \{0,1,\cdots,\lceil\alpha\rceil-1\}
        \end{aligned} \right.
    \end{equation}
    is given by \begin{equation}
        f(t) = \sum_{\ell=0}^{\lceil \alpha \rceil -1}  b_{\ell}\,  e_{\alpha,\ell+1}(t;
        \lambda).
    \end{equation}
    The solution to
    \begin{equation}
        \left \{  \begin{aligned}
           & \RL[]{\alpha}{t} [f](t) =  -\lambda f(t)
            \\
            &\RL[]{\alpha-\ell}{t}[f] \vert _{t=0^+} = b_{\alpha-\ell} \qquad \text{for } \ell \in \{1,\cdots,\lceil\alpha\rceil\}
        \end{aligned} \right.
    \end{equation}  
    is given by
\begin{equation}
        f(t) = \sum_{\ell=1}^{\lceil \alpha \rceil }  e_{\alpha,\alpha+1-\ell}( t;\lambda) b_{\alpha-\ell}.
    \end{equation}
    
\end{lemma}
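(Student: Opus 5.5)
We would prove both parts with the Laplace transform, using the transform formula \cref{eq:laplace_of_e} for $e_{\alpha,\beta}(t;\lambda)$, and then invoke uniqueness. Throughout, write $n=\lceil\alpha\rceil$ and $F(s)=\El[f](s)$.

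The first step is to record the transform rules for the two derivatives in \cref{def:derivativeRL}. These follow from the convolution theorem, since $\El[\J[]{\nu}{}[g]](s)=s^{-\nu}\El[g](s)$, combined with the integer-order rule
\[
\El[D^n g](s)=s^nG(s)-\sum_{k=0}^{n-1}s^{n-1-k}g^{(k)}(0^+).
\]
For the Caputo derivative this gives
\[
\El[\caputo[]{\alpha}{}[f]](s)=s^\alpha F(s)-\sum_{\ell=0}^{n-1}s^{\alpha-1-\ell}f^{(\ell)}(0).
\]
For the Riemann--Liouville derivative, take $g=\J[]{n-\alpha}{}[f]$. Then $g^{(k)}=\RL[]{\alpha-n+k}{}[f]$, with the convention of \cref{def:derivativeRL} when $\alpha-n+k\le 0$. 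Substituting $\ell=n-k$ gives
\[
\El[\RL[]{\alpha}{}[f]](s)=s^\alpha F(s)-\sum_{\ell=1}^{n}s^{\ell-1}\,\RL[]{\alpha-\ell}{}[f](0^+).
\]

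Next, transform the equation $D^\alpha f=-\lambda f$ and solve for $F$.

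In the Caputo case,
\[
F(s)=\sum_{\ell=0}^{n-1}b_\ell\,\frac{s^{\alpha-1-\ell}}{s^\alpha+\lambda}.
\]
By \cref{eq:laplace_of_e} with $\beta=\ell+1$, each term is the transform of $b_\ell e_{\alpha,\ell+1}(t;\lambda)$. The condition $\beta<\alpha+1$ holds because $\ell+1\le n<\alpha+1$.

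In the Riemann--Liouville case,
\[
F(s)=\sum_{\ell=1}^{n}b_{\alpha-\ell}\,\frac{s^{\ell-1}}{s^\alpha+\lambda}.
\]
Matching $s^{\alpha-\beta}=s^{\ell-1}$ gives $\beta=\alpha+1-\ell$. The condition $\beta<\alpha+1$ holds since $\ell\ge1$, so each term inverts to $b_{\alpha-\ell}\,e_{\alpha,\alpha+1-\ell}(t;\lambda)$.

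Lerch's theorem (injectivity of $\El$ on continuous, or locally integrable, functions of exponential order) then identifies $f$ with the stated sum.

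To make the argument rigorous rather than formal, we would also verify directly that the candidate functions satisfy the problem. Apply the term-wise rule
\[
\J[]{\nu}{}[t^{\mu-1}]=\frac{\Gamma(\mu)}{\Gamma(\mu+\nu)}t^{\mu+\nu-1}
\]
to the series \cref{eq:series_def_e_ab_lambda}; this is justified because the series converges uniformly on compact sets after multiplying by $t^{1-\beta}$. The $n=0$ term is annihilated or shifted exactly as needed, using the convention $1/\Gamma(-j)=0$, so $D^\alpha e=-\lambda e$ holds term by term. Evaluating the relevant derivatives at $0^+$ then gives the Kronecker pattern of initial data. For example, $\RL[]{\alpha-\ell'}{}[e_{\alpha,\alpha+1-\ell}](0^+)=\delta_{\ell\ell'}$, since $\RL[]{\alpha-\ell'}{}$ maps the leading power $t^{\alpha-\ell}$ to a multiple of $t^{\ell'-\ell}$.

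\textbf{Main obstacle.} The delicate step is the Riemann--Liouville case, where the solutions blow up as $t\to0^+$ (e.g. $t^{\alpha-2}$ for $\ell=2$, $\alpha\in(1,2)$). Two things need care there. First, the boundary values $\RL[]{\alpha-\ell}{}[f](0^+)$ must exist and the transform rule must be valid for such singular but locally integrable $f$. Second, uniqueness must be stated in a class of functions that includes these singular solutions. I would handle both by working in the class of locally integrable functions of exponential order whose fractional integral $\J[]{n-\alpha}{}[f]$ is $C^{n-1}$ on $[0,\infty)$ with an integrable $n$-th derivative. In that class the integer-order transform rule applies to $g$, and Lerch's theorem gives uniqueness. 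Existence of the exponential-order bound for $e_{\alpha,\beta}(t;\lambda)$ follows from its entire-function growth $|E_{\alpha,\beta}(z)|\le C e^{|z|^{1/\alpha}}$.
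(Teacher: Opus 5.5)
Your proposal is correct and follows the same route as the paper. You Laplace-transform both the Caputo and the Riemann--Liouville equations using $\mathcal{L}[J^{\nu}f]=s^{-\nu}\mathcal{L}[f]$ and the integer-order derivative rule, solve algebraically for $\mathcal{L}[f]$, and invert term by term with the transform pair for $e_{\alpha,\beta}(t;\lambda)$. Your extra steps — Lerch uniqueness in an exponential-order class that contains the singular Riemann--Liouville solutions, and the direct term-wise check of the equation and of the Kronecker pattern of initial data — make rigorous what the paper leaves to ``the rest is rearranging,'' without changing the argument.
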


\begin{proof}
  Take $\alpha = n-\nu$, so that $\caputo[]{\alpha}{} = \J[]{\nu}{} \frac{d^n}{dt^n}$, and $\RL[]{\alpha}{} = \frac{d^n}{dt^n} \J[]{\nu}{}$ . 
  The Laplace transform $\El[ \J[]{\nu}{} f](s) = s^{-\nu} \El[f](s)$ (see \cite{cvetkoConvolutiontosumIdentitiesMittagLeffler2026}).
  Now, consider the Laplace transform of the Caputo equation,
  \begin{equation*}
       \El \left[ \J[]{\nu}{} \frac{d^n}{dt^n} f \right](s) = -\lambda \El[f](s).
  \end{equation*}
  Using properties of the Laplace transform,  this becomes
  \begin{align*}
       s^{-\nu} \left ( s^n \El[f] - \sum_{\ell=0}^{n-1} s^{n-1-\ell} \frac{d^\ell f}{dt^\ell} \bigg \vert_{t=0} \right) =& -\lambda \El[f](s)
      \\
      \El[f] =&  \sum_{\ell=0}^{n-1} b_{\ell} \frac{ s^{\alpha-1-\ell} }{s^\alpha + \lambda}
  \end{align*}
and the result follows from \cref{eq:laplace_of_e}. For the Riemann-Liouville, we proceed in a similar manner.
\begin{align*}
    \El \left[ \frac{d^n (\J[]{\nu}{}f)}{dt^n}
    \right](s) =& -\lambda \El[f](s)
    \\
    s^n \El[ \J[]{\nu}{}f](s) - \sum_{\ell=0}^{n-1} s^{n-1-\ell}  \frac{d^\ell (\J[]{\nu}{}f)}{dt^\ell} \bigg |_{t=0} =& -\lambda \El[f](s)
    \\
    s^{\alpha} \El[  f](s) - \sum_{\ell=0}^{n-1} s^{n-1-\ell}   \RL[]{\ell-\nu}{}[f]|_{t=0} =& -\lambda \El[f](s),
\end{align*}
and the rest is rearranging.
\end{proof}

Now we can explicitly represent the solutions of fractional diffusion, $u_\gamma$, with Mittag-Leffler functions.

\begin{theorem}\label{thm:spectral_rep_u_xi}
    Let $u_{\gamma}(x,t)$ ($\gamma=0,1,\alpha-1,\alpha-2$) be specified by \cref{def:anom_diff_u_xi}. The Fourier $ \left(f(x) \xrightarrow[]{\Ef} \tilde f(k) \right)$  representation of $u_\gamma$ is given by
    \begin{equation}
        \tilde u_{\gamma}(k,t) = \tilde v_{\gamma}(k)\, e_{\alpha,\gamma+1}(t; k^2) .
    \end{equation}
\end{theorem}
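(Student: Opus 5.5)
The plan is to apply the Fourier transform in the spatial variable to reduce the PDE to a family of scalar fractional ODEs, one per frequency, and then invoke \cref{lemma:solution_to_fundamental_equation} on each. Since the fractional time derivatives act only in $t$, they commute with $\Ef$ in $x$. Because $\Ef[-\Delta f](k) = k^2 \tilde f(k)$ (or, on a bounded domain, the generalized Fourier expansion in eigenfunctions of $-\Delta$ with eigenvalues $k^2>0$), \cref{eq:anom_diff_u_xi} becomes, for each fixed $k$,
\begin{equation*}
D^\alpha_t \tilde u_\gamma(k,t) = -k^2 \tilde u_\gamma(k,t), \qquad D^\gamma_t \tilde u_\gamma(k,t)\big|_{t=0^+} = \tilde v_\gamma(k).
\end{equation*}
The remaining initial datum, whenever it is unspecified, is zero. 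This is exactly the fundamental equation with $\lambda = k^2$.

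I will then go through the six cases of \cref{tab:u_xi} and read off the surviving term of the sums in \cref{lemma:solution_to_fundamental_equation}.

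\textbf{Caputo cases.} For Caputo with $\gamma=0$ (and $\alpha$ in either range), only $b_0=\tilde v_0$ is nonzero, which gives $\tilde v_0\, e_{\alpha,1}(t;k^2)$. For Caputo with $\gamma=1$, $\alpha\in(1,2)$, only $b_1=\tilde v_1$ survives, which gives $e_{\alpha,2}$. Both agree with $e_{\alpha,\gamma+1}$.

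\textbf{Riemann--Liouville cases.} Here the datum $\RL[]{\alpha-\ell}{t}[f]|_{0^+}=b_{\alpha-\ell}$ multiplies $e_{\alpha,\alpha+1-\ell}$. Setting $\gamma=\alpha-\ell$ gives the index $\alpha+1-\ell=\gamma+1$. So $\gamma=\alpha-1$ (with $\ell=1$, in either range of $\alpha$) and $\gamma=\alpha-2$ (with $\ell=2$, $\alpha\in(1,2)$) again yield $\tilde v_\gamma\, e_{\alpha,\gamma+1}(t;k^2)$.

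This index bookkeeping is the core of the statement and is routine.

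The main obstacle is justification rather than computation, and it has two parts. The first is that the initial conditions are imposed only in $L_2(\Omega)$ as $t\to0^+$, and for the Riemann--Liouville cases $u_\gamma$ may blow up at $t=0$. To handle this, I would define the candidate $u_\gamma$ by the inverse transform of $\tilde v_\gamma(k)\,e_{\alpha,\gamma+1}(t;k^2)$. I would then verify directly that it solves the equation and satisfies the data, which needs uniform bounds on Mittag-Leffler functions of the form $|E_{\alpha,\beta}(-\lambda t^\alpha)|\le C/(1+\lambda t^\alpha)$. These bounds hold for $\alpha<2$ and are used with Plancherel and dominated convergence. The second part is that the paper asserts uniqueness in \cref{def:anom_diff_u_xi}, so it suffices to exhibit a solution. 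If needed, uniqueness follows because each Fourier mode solves a scalar problem whose Laplace transform is forced, by the computation in the proof of \cref{lemma:solution_to_fundamental_equation}, to equal $\tilde v_\gamma(k)\, s^{\alpha-\gamma-1}/(s^\alpha+k^2)$. Injectivity of the Laplace transform and \cref{eq:laplace_of_e}, valid since $\gamma+1<\alpha+1$, then conclude the argument.
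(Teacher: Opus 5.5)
Your proposal is correct and is essentially the paper's proof: Fourier transforming in $x$ turns each mode into the fundamental equation with $\lambda=k^2$, \cref{lemma:solution_to_fundamental_equation} finishes it, and your bookkeeping $\gamma=\alpha-\ell\Rightarrow\alpha+1-\ell=\gamma+1$ makes explicit what the paper leaves implicit. Your existence and uniqueness remarks go beyond the paper, with one caveat: for $\alpha\in(1,2)$, the zero secondary data in the cases $\gamma=0$ and $\gamma=\alpha-2$ have multiplier $-k^2t^{\alpha-1}E_{\alpha,\alpha}(-k^2t^\alpha)$, which your uniform bound controls only by $C/t$, so your dominated-convergence argument does not give them in $L_2$ for general $v_\gamma$ (they hold modewise, or in $L_2$ for $H^2$ data) --- this is harmless, since \cref{def:anom_diff_u_xi} imposes only the primary datum in $L_2$.
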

\begin{proof}
    Regardless of derivative type, the subdiffusion equation
    \begin{equation}
       \begin{cases}
            D^{\alpha}_t u_{\gamma}(x,t) -  \Delta_x u_{\gamma}(x,t) = 0
            \\
            D^{\gamma}_t[u_\gamma](x,0^+) = v_{\gamma}(x)
       \end{cases}
    \end{equation}
    is represented in the Fourier domain as
    \begin{equation}
         \begin{cases}
             D^{\alpha}_t \tilde u_{\gamma}(k,t) +  k^2 \tilde u_{\gamma}(k,t) = 0
             \\
             D^\gamma_t \tilde u_{\gamma}(k,0^+) = \tilde v_\gamma (k),
         \end{cases} 
    \end{equation}
    where we abbreviate $\abs{ k}^2$ as $k^2 $ for vector-valued $k$.
    For each value of $k^2$, this is an instance of the fundamental fractional differential equation, which has the stated solution by \cref{lemma:solution_to_fundamental_equation}. 
\end{proof}

\section{Anomalous diffusion freeze-out}
\label{section:asymptotics}
\subsection{Main results}

\begin{figure}
    \centering
    \includegraphics[width=1.\linewidth]{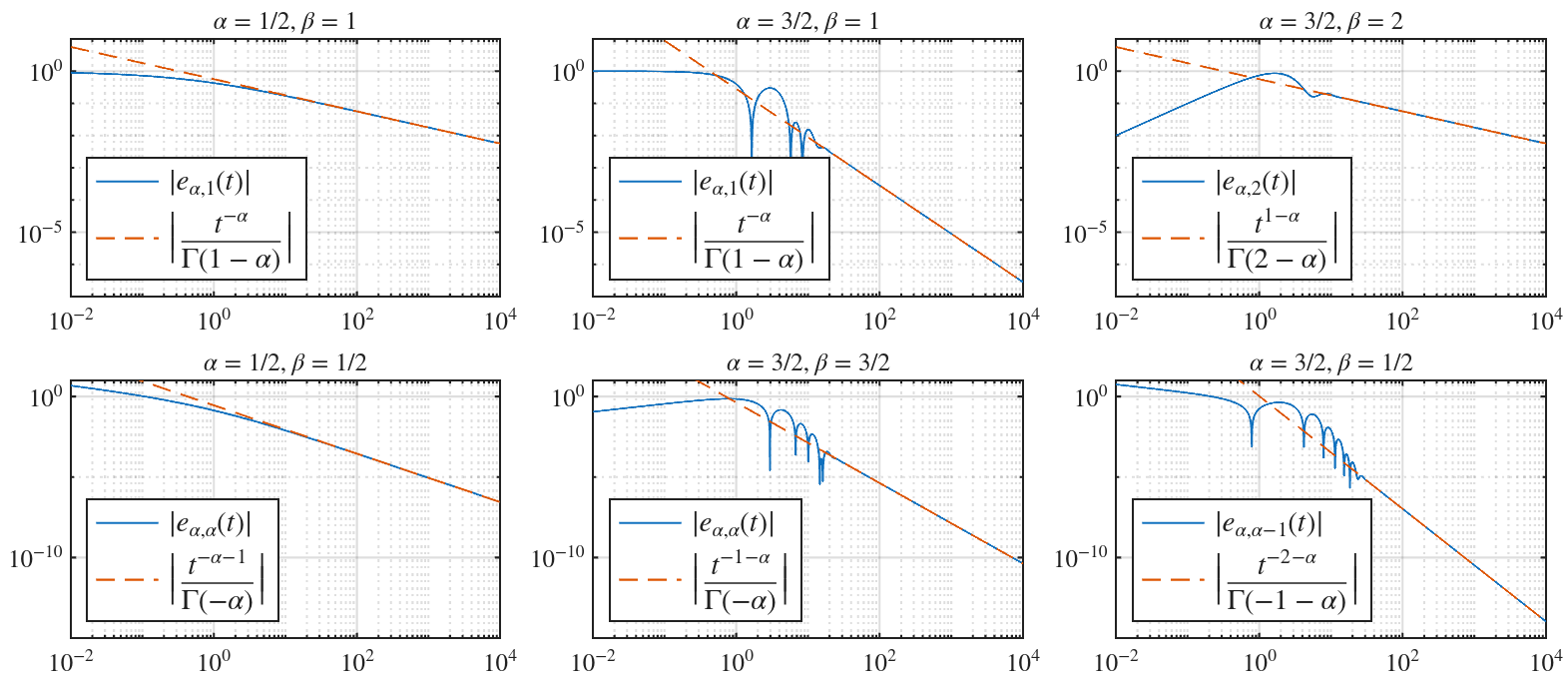}
    \caption{ The Mittag-Leffler function,
    $e_{\alpha,\beta}(t)$, is plotted (in absolute value) against its leading-order asymptotic term as found in \cref{eq:asymptotic_of_e} for the six distinct regimes of fractional diffusion under consideration. 
 }
    \label{fig:ML_asymptotics_3hlf}
\end{figure}

To analyze the long-time behavior of the $u_\gamma(x,t)$ (as specified in \cref{def:anom_diff_u_xi,tab:u_xi}) we will consider the asymptotics of the Mittag-Leffler function.
For $\alpha \in(0,1)\cup(1,2)$, the Mittag-Leffler function admits the following asymptotic series:
\begin{equation} \label{eq:asymptotic_of_e}
    e_{\alpha,\beta}( t;\lambda )= \sum_{n=1}^{N} \frac{(-1)^{n+1} t^{-\alpha n+\beta -1} }{ \lambda ^{  n}\Gamma(\beta-\alpha n) } 
    + \mathcal O (t^{-\alpha (N+1)+\beta-1})
    \end{equation}
(see \cite{parisAsymptoticsMellinBarnesIntegrals2001,erdelyiMiscellaniousFunctions1955}) though sometimes this series is only stated for $\alpha<1$ and $\beta=1$ \cite{mainardiPropertiesMittagLefflerFunction2014}.
The leading term is generally given by $n=1$, so that for large $t$
\begin{equation}
    e_{\alpha,\beta}(t;\lambda) \approx \lambda^{-1}\frac{t^{-\alpha+\beta-1}}{ \Gamma(-\alpha+\beta)} ,
\end{equation}
except in the case where $\beta-\alpha \in \Z_{\leq0}$, where $n=2$ gives the first nonzero term:
\begin{equation}
    e_{\alpha,\alpha-j}(t;\lambda) \approx - \lambda^{-2}\frac{t^{-\alpha-1-j}}{\Gamma(-\alpha-j)} \qquad (j\in \Z_{\ge0})
\end{equation}
due to the pole at $\Gamma(-j)$. These asymptotics are displayed for $\alpha=1/2,3/2$ and relevant values of $\beta$ in \cref{fig:ML_asymptotics_3hlf}.

Applying this asymptotic approximation to $u_0$ (recall \cref{def:anom_diff_u_xi}), we have
\begin{equation}
    \tilde u_0(k,t) = \tilde v_0(k) e_{\alpha,1}(t;k^2) \approx \frac{ \tilde v_0(k)}{k^2} \frac{t^{-\alpha}}{\Gamma(1-\alpha)}.
\end{equation}
Using the inverse Fourier transform $k^2 \xrightarrow[]{\Ef^{-1}}-\Delta$, we obtain the factorized asymptotic form for $u_0(x,t)$:
\begin{equation}
    u_0(x,t) \approx (-\Delta)^{-1} [v_0](x)\frac{t^{-\alpha}}{\Gamma(1-\alpha)}. \label{eq:apx1}
\end{equation}
Similar reasoning gives the asymptotic form for the other $u_{\gamma}$:
\begin{align}
    u_1(x,t) \approx& (-\Delta)^{-1} [v_1](x)\frac{t^{-\alpha+1}}{\Gamma(2-\alpha)}\label{eq:apx2}
    \\
      u_{\alpha-1}(x,t) \approx& -(-\Delta)^{-2} [v_{\alpha-1}](x)\frac{t^{-\alpha-1}}{\Gamma(-\alpha)} \label{eq:apx3}
      \\
        u_{\alpha-2}(x,t) \approx& -(-\Delta)^{-2} [v_{\alpha-2}](x)\frac{t^{-\alpha-2}}{\Gamma(-\alpha-1)}. \label{eq:apx4}
\end{align}

Notable about \cref{eq:apx1,eq:apx2,eq:apx3,eq:apx4} is that solutions to the anomalous diffusion equation 
essentially stop evolving in time, 'freezing-out' to a static profile in space which simply decays according to a power law. Moreover, the space-dependent part (Caputo:$(-\Delta)^{-1} [v_\gamma] $, Riemann-Liouville: $(-\Delta)^{-2}[ v_{\gamma}]$) is identical for any $\alpha\in(0,1)\cup(1,2)$, and explicitly encodes the initial data. The reason why the form is different between fractional derivative types is explained in \cref{subsection:memory_kernels}. Though these asymptotic forms 
have immense implications for anomalous diffusion, to the authors' knowledge this factorization has not been investigated in previous publications.

To formalize these results, we will make use of a recurrence relation (which is implicitly present in the asymptotic series \cref{eq:asymptotic_of_e}):
\begin{lemma}[Recurrence relation] \label{lemma:recurrence_relation}
    \begin{equation} \label{eq:e_RR}
        e_{\alpha,\beta}(t;\lambda) = \frac{t^{-\alpha+\beta-1}}{\lambda \Gamma(-\alpha+\beta)} - \lambda^{-1} e_{\alpha,\beta-\alpha}(t;\lambda)
    \end{equation}
\end{lemma}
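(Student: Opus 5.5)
The plan is to verify the recurrence directly from the series definition \cref{eq:series_def_e_ab_lambda}, shifting the summation index; a Laplace-transform argument will serve as a cross-check. Write
\[
e_{\alpha,\beta-\alpha}(t;\lambda) = \sum_{n=0}^\infty \frac{(-\lambda)^n t^{\alpha n+\beta-\alpha-1}}{\Gamma(\alpha n+\beta-\alpha)} = \sum_{m=-1}^\infty \frac{(-\lambda)^{m+1} t^{\alpha m+\beta-1}}{\Gamma(\alpha m+\beta)},
\]
using the substitution $m=n-1$. The $m=-1$ term is exactly $\frac{t^{-\alpha+\beta-1}}{\Gamma(\beta-\alpha)}$. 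The remaining sum, over $m\ge 0$, is $-\lambda\, e_{\alpha,\beta}(t;\lambda)$.

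This gives $e_{\alpha,\beta-\alpha}(t;\lambda) = \frac{t^{\beta-\alpha-1}}{\Gamma(\beta-\alpha)} - \lambda e_{\alpha,\beta}(t;\lambda)$. Solving for $e_{\alpha,\beta}$ and dividing by $\lambda>0$ yields \cref{eq:e_RR}.

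The only point needing care is that $\beta-\alpha$ may make $\Gamma$ arguments nonpositive integers. By the paper's convention $1/\Gamma(-j)=0$, so every term is well defined. The series are entire in $t^\alpha$ (absolutely convergent for $t>0$, since $1/\Gamma$ grows superexponentially), so reindexing and splitting off a term are legitimate. When $\beta-\alpha\in\Z_{\le0}$ the first term simply vanishes, which is consistent with the later asymptotic remark.

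As a sanity check, when $\beta<\alpha+1$ one can compare Laplace transforms via \cref{eq:laplace_of_e}:
\[
\frac{s^{\alpha-\beta}}{s^\alpha+\lambda} = \lambda^{-1}\Bigl(s^{\alpha-\beta} - \frac{s^{2\alpha-\beta}}{s^\alpha+\lambda}\Bigr),
\]
and $s^{\alpha-\beta}$ is the transform of $t^{\beta-\alpha-1}/\Gamma(\beta-\alpha)$. This transform route only covers the range where the transforms exist, which is why I rely on the series argument as the main proof. I expect no real obstacle beyond this bookkeeping of the $1/\Gamma$ convention.
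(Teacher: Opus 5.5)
Your proof is correct and is exactly the paper's argument: expand $e_{\alpha,\beta-\alpha}(t;\lambda)$ by its series, split off the $n=0$ term $\frac{t^{\beta-\alpha-1}}{\Gamma(\beta-\alpha)}$, recognize the remainder as $-\lambda e_{\alpha,\beta}(t;\lambda)$, and rearrange. One minor remark on your optional Laplace cross-check: $s^{\alpha-\beta}$ is the classical transform of $t^{\beta-\alpha-1}/\Gamma(\beta-\alpha)$ only when $\beta>\alpha$, which is stronger than $\beta<\alpha+1$ alone; since the series argument carries the proof, this does not affect your result.
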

\begin{proof}
Using the series representation \cref{eq:series_def_e_ab_lambda}:
    \begin{align*}
        e_{\alpha,\beta-\alpha}(t;\lambda) =& \sum_{n=0}^\infty \frac{(-\lambda)^{n} t^{\alpha n -\alpha+\beta-1} }{\Gamma(\alpha n -\alpha+ \beta)} 
        =  \frac{t^{-\alpha+\beta-1}}{\Gamma(-\alpha+\beta)} - \lambda e_{\alpha,\beta}(t;\lambda).
    \end{align*} Rearranging this, we obtain the stated recurrence relation.
\end{proof}

Both sides of \cref{eq:apx1,eq:apx2,eq:apx3,eq:apx4} tend to zero as $t\rightarrow\infty$. We will multiply each side of these by the proper power of $t$ to show that the $L_2$ error of this approximation goes to $0$ in a bounded domain as $t\rightarrow\infty$. 
\begin{theorem} \label{thm:asymptotic_convergence_bdd_domain}
    Let
    $u_{\gamma}(x,t)$ be specified by \cref{def:anom_diff_u_xi}, with $\Omega$ being a bounded domain. Then the following limits hold in $L_2(\Omega)$ as $t\rightarrow \infty$:
    \begin{align}
          t^\alpha   u_0(x,t) \xrightarrow{L_2(\Omega)} & \frac{1}{ \Gamma(1-\alpha)} (-\Delta)^{-1} v_0(x)
        \\
          t^{\alpha-1} u_1(x,t) \xrightarrow{L_2(\Omega)} &  \frac{1}{\Gamma(2-\alpha)}(-\Delta)^{-1} v_1(x)
        \\
         t^{\alpha+1}  u_{\alpha-1}(x,t) \xrightarrow{L_2(\Omega)} &
       -\frac{1}{ \Gamma(-\alpha)}(-\Delta)^{-2} v_{\alpha-1}(x)
        \\
          t^{\alpha+2}  u_{\alpha-2}(x,t) \xrightarrow{L_2(\Omega)} & 
        -\frac{1}{ \Gamma(-1-\alpha)}(-\Delta)^{-2} v_{\alpha-2}(x).
    \end{align}
\end{theorem}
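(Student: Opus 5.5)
Since $\Omega$ is bounded and $-\Delta$ with the boundary condition $\mathcal B$ has strictly positive eigenvalues, I would work in the eigenbasis $\{\phi_j\}$ of $-\Delta$, with eigenvalues $0<\lambda_1\le\lambda_2\le\cdots\to\infty$. Exactly as in \cref{thm:spectral_rep_u_xi}, but with eigenfunction coefficients instead of Fourier modes, the solution is
\[
u_\gamma(x,t)=\sum_j v_{\gamma,j}\, e_{\alpha,\gamma+1}(t;\lambda_j)\,\phi_j(x),\qquad v_{\gamma,j}=\langle v_\gamma,\phi_j\rangle .
\]
The targets are
\[
(-\Delta)^{-1}v=\sum_j \lambda_j^{-1}v_j\phi_j,\qquad (-\Delta)^{-2}v=\sum_j\lambda_j^{-2}v_j\phi_j .
\]
By Parseval, each claimed limit reduces to
\[
\sum_j |v_{\gamma,j}|^2\,\big|t^{p}e_{\alpha,\gamma+1}(t;\lambda_j)-c_j\big|^2\to0,
\]
where $p\in\{\alpha,\alpha-1,\alpha+1,\alpha+2\}$ and $c_j$ is the corresponding coefficient, e.g. $c_j=\lambda_j^{-1}/\Gamma(1-\alpha)$ for $\gamma=0$. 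The plan is to use dominated convergence for this series. This needs (i) termwise convergence and (ii) a summable dominating bound that is uniform in $t$ large.

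For the error, I would use \cref{lemma:recurrence_relation} to make it explicit, with no appeal to asymptotic series.

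\textbf{Caputo cases.} Take $\beta=\gamma+1\in\{1,2\}$. One application of the lemma gives
\[
t^{\alpha-\beta+1}e_{\alpha,\beta}(t;\lambda)-\frac{1}{\lambda\Gamma(\beta-\alpha)}=-\lambda^{-1}t^{\alpha-\beta+1}e_{\alpha,\beta-\alpha}(t;\lambda).
\]

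\textbf{Riemann--Liouville cases.} Here $\beta-\alpha\in\{0,-1\}$, so the first term of the lemma vanishes because $1/\Gamma(-j)=0$. Applying the lemma a second time gives
\[
e_{\alpha,\beta}=\frac{-t^{-2\alpha+\beta-1}}{\lambda^2\Gamma(\beta-2\alpha)}+\lambda^{-2}e_{\alpha,\beta-2\alpha}.
\]
So the error is $\lambda^{-2}t^{p}e_{\alpha,\beta-2\alpha}(t;\lambda)$.

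In every case the error term has the form $\lambda^{-m}t^{p}e_{\alpha,\beta'}(t;\lambda)$. By the asymptotic expansion \cref{eq:asymptotic_of_e} applied to $e_{\alpha,\beta'}$, this term is $O(t^{-\alpha})\to0$ for each fixed $\lambda$. That settles termwise convergence.

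The main obstacle is (ii), the uniform domination in $j$. The $O$-constant in \cref{eq:asymptotic_of_e} depends on $\lambda$, so I would not use it directly. Instead I would use the scaling \cref{eq:relate_e_e_lambda}:
\[
t^{p}e_{\alpha,\beta'}(t;\lambda)=\lambda^{(1-\beta')/\alpha}\,t^{p}e_{\alpha,\beta'}(\lambda^{1/\alpha}t).
\]
Now $e_{\alpha,\beta'}(s)$ is a single continuous function of $s>0$, and by the asymptotic series it satisfies a global bound of the form
\[
|e_{\alpha,\beta'}(s)|\le C\,\min\big(s^{\beta'-1},\,s^{-\alpha+\beta'-1}\big)
\]
(the small-$s$ behaviour comes from the leading series term). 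Setting $s=\lambda^{1/\alpha}t$ and $t\ge1$, the large-$s$ branch gives
\[
\lambda^{-m}\,t^{p}\,|e_{\alpha,\beta'}(t;\lambda)|\le C\lambda^{-m-1}t^{p-\alpha-\beta'+1+\ldots},
\]
which by the exponent bookkeeping equals $C\lambda^{-m-1}t^{-\alpha}$. Since $\lambda\ge\lambda_1>0$ and $s\ge\lambda_1^{1/\alpha}$ is bounded away from $0$, only the large-$s$ branch is needed. This yields a bound $C'\lambda_1^{-m-1}$ independent of $j$, so the dominating series is $C''\sum_j|v_{\gamma,j}|^2<\infty$.

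Even simpler, the bound shows the whole error is at most $C\,t^{-\alpha}\|v_\gamma\|_{L_2}$, which gives an explicit rate. I would check the exponent bookkeeping case by case: in each case the power of $t$ should come out as $t^{-\alpha}$ times a negative power of $\lambda$, which is bounded using $\lambda\ge\lambda_1$.
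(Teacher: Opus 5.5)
Your proposal is correct and follows essentially the paper's route: expand in the eigenbasis of $-\Delta$, apply Parseval, and use the recurrence relation (once for Caputo, twice for Riemann--Liouville) to write the per-mode error as $\pm\lambda_j^{-m}E_{\alpha,\beta'}(-\lambda_j t^\alpha)$, then dominate using $\lambda_j\ge\lambda_1>0$. The only difference is that you use the decaying bound $|E_{\alpha,\beta'}(-x)|\le C/(1+x)$ in place of the paper's uniform bound $B_0$ plus dominated convergence, which yields the explicit rate $O(t^{-\alpha}\|v_\gamma\|_{L_2})$; note that your garbled intermediate exponent should read $p-\alpha+\beta'-1$, which does equal $-\alpha$ in all four cases.
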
 
We state and prove \cref{thm:backwards_problem} before proving \cref{thm:asymptotic_convergence_bdd_domain}.

  Note that the initial data, $v_\gamma(x)$, is explicitly encoded in this asymptotic form. This has immediate implications for the backward-in-time inverse problem of reconstructing the initial data from the values of the solution measured at later times. 
Approximate equations \cref{eq:apx1,eq:apx2,eq:apx3,eq:apx4} can be solved for the initial data.
Further, even if the current time $t$ and the order of the derivative $\alpha$ are unknown, the initial data (up to a constant) can be retrieved by Laplacian or bi-Laplacian.
A comparison of this method with the corresponding (ill-posed) problem for standard diffusion is made in \cref{subsection:compare_standard_diff}. The next theorem  shows that this approximation becomes exact in the $t\rightarrow\infty$ limit, even in an unbounded domain.
\begin{theorem}
    \label{thm:backwards_problem}
     Let 
     $u_{\gamma}(x,t)$ be specified by \cref{def:anom_diff_u_xi}, with $\Omega =\R^n$.
     Then in the limit as $t\rightarrow \infty$  \begin{align}
          t^\alpha \Gamma(1-\alpha) (-\Delta) u_{0}(x,t) \xrightarrow[]{L_2(\R^n) } &v_0(x) \label{eq:init_data_recov_0}
         \\
          t^{\alpha-1} \Gamma(2-\alpha) (-\Delta) u_{1}(x,t) \xrightarrow[]{L_2(\R^n))} &v_{1}(x) \label{eq:init_data_recov_1}
         \\
         - t^{\alpha+1} \Gamma(-\alpha) (-\Delta)^2 u_{\alpha-1}(x,t) \xrightarrow[]{L_2(\R^n))} &v_{\alpha-1}(x) \label{eq:init_data_recov_a1}
         \\
         - t^{\alpha+2} \Gamma(-\alpha-1) (-\Delta)^2 u_{\alpha-2}(x,t) \xrightarrow[]{L_2(\R^n))} &v_{\alpha-2}(x). \label{eq:init_data_recov_a2}
     \end{align}
\end{theorem}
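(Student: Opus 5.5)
The plan is to work entirely on the Fourier side. By Plancherel, convergence in $L_2(\R^n)$ is equivalent to convergence in $L_2(\R^n_k)$ of the Fourier transforms. By \cref{thm:spectral_rep_u_xi}, each $\tilde u_\gamma(k,t) = \tilde v_\gamma(k)\, e_{\alpha,\gamma+1}(t;k^2)$. Applying $(-\Delta)^m$ (with $m=1$ in the Caputo cases and $m=2$ in the Riemann--Liouville cases) multiplies by $k^{2m}$. Let $p_\gamma$ denote the relevant power of $t$ and $c_\gamma$ the Gamma-function constant. Each statement then reduces to showing
\[
\int_{\R^n} \abs{\tilde v_\gamma(k)}^2 \,\abs{c_\gamma t^{p_\gamma} k^{2m} e_{\alpha,\gamma+1}(t;k^2) - 1}^2 \, dk \to 0 .
\]
I would prove this by dominated convergence: pointwise convergence of the multiplier to $1$ for each fixed $k\neq 0$, together with a uniform-in-$(k,t)$ bound on the multiplier. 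Since $\abs{\tilde v_\gamma}^2$ is integrable, such a bound serves as a dominating function.

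\emph{Reduction to a single function of one variable.} By \cref{eq:relate_e_e_lambda}, $e_{\alpha,\beta}(t;k^2) = k^{2(1-\beta)/\alpha} e_{\alpha,\beta}(\tau)$ with $\tau = \abs{k}^{2/\alpha} t$. Substituting, I expect all the $k$-powers to combine so that the multiplier is exactly $M_\gamma(\tau) := c_\gamma \tau^{p_\gamma} e_{\alpha,\gamma+1}(\tau)$. For example, when $\gamma=0$ we get $k^2 t^\alpha e_{\alpha,1}(t;k^2) = \tau^\alpha e_{\alpha,1}(\tau)$, and the other cases follow similarly with $\beta=\gamma+1$ and $p_\gamma = \alpha-\gamma$ or $\alpha-\gamma+1$. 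The claim therefore becomes: $M_\gamma(\tau)\to 1$ as $\tau\to\infty$, and $M_\gamma$ is bounded on $(0,\infty)$. Pointwise convergence for each fixed $k\neq0$ follows because $\tau\to\infty$ as $t\to\infty$, and $\{k=0\}$ has measure zero.

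\emph{Limit and boundedness of $M_\gamma$.} The limit $M_\gamma(\tau)\to 1$ is exactly the leading-order asymptotics \cref{eq:asymptotic_of_e}. In the Riemann--Liouville cases, where $\beta-\alpha\in\Z_{\le 0}$, the $n=1$ term vanishes and the $n=2$ term is the leading one. To make this rigorous with a controlled remainder, I would use \cref{lemma:recurrence_relation}. In the Caputo cases it gives
\[
\tau^\alpha e_{\alpha,\beta}(\tau)\,\Gamma(\beta-\alpha)\,\tau^{1-\beta} = 1 - \Gamma(\beta-\alpha)\,\tau^{\alpha+1-\beta} e_{\alpha,\beta-\alpha}(\tau).
\]
The correction term tends to $0$ by \cref{eq:asymptotic_of_e} applied to $e_{\alpha,\beta-\alpha}$. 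In the Riemann--Liouville cases I would apply the recurrence twice, since the first application produces a $1/\Gamma(-j)=0$ term.

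Boundedness on $[T,\infty)$ for large $T$ follows from the convergence. On $(0,T]$, $M_\gamma$ is continuous, because $e_{\alpha,\beta}$ is a power times an entire function. Near $\tau=0$, the series gives $M_\gamma(\tau) \sim c_\gamma \tau^{p_\gamma+\beta-1}/\Gamma(\beta)$, and the exponent $p_\gamma+\beta-1$ equals $\alpha$ or $2\alpha$, which is positive. So $M_\gamma$ is bounded near $0$ and in fact tends to $0$ there. (For $\beta=\alpha-1<1$ with $\alpha\in(0,1)$, $\beta$ may be negative, but the exponent is still positive.) This yields $\sup_{\tau>0}\abs{M_\gamma(\tau)}<\infty$, and dominated convergence finishes the proof.

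\emph{Main obstacle.} The step I expect to need the most care is justifying the asymptotic expansion \cref{eq:asymptotic_of_e} uniformly enough to conclude $M_\gamma\to1$ in the superdiffusive range $\alpha\in(1,2)$. There, $e_{\alpha,\beta}$ oscillates and has real zeros, and exponentially decaying oscillatory terms accompany the algebraic tail. My first attempt would simply cite \cref{eq:asymptotic_of_e} (from \cite{parisAsymptoticsMellinBarnesIntegrals2001}), which holds for $\alpha<2$ on the positive real axis. Only continuity and a limit are needed, not a rate, so the cited expansion with $N=1$ or $N=2$ suffices.
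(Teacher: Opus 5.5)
Your proposal is correct and follows essentially the same route as the paper. Both use Plancherel and then the recurrence relation to write the multiplier minus one as $-\Gamma(\beta-\alpha)E_{\alpha,\beta-\alpha}(-k^2t^\alpha)$ in the Caputo cases, or with $\beta-2\alpha$ after applying it twice in the Riemann--Liouville cases. Pointwise decay then comes from the cited asymptotics, and dominated convergence finishes with a uniform bound; you justify that bound a bit more carefully than the paper by checking both $\tau\to0$ and $\tau\to\infty$.

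One small slip: in the Riemann--Liouville cases the power is $p_\gamma=2\alpha-\gamma$ (that is, $\alpha+1$ and $\alpha+2$), not $\alpha-\gamma+1$. With $p_\gamma=m\alpha+1-\beta$, your scaling to $M_\gamma(\tau)$ and your small-$\tau$ exponent $m\alpha$ come out exactly as you claim.
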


\begin{proof}
We will prove \cref{thm:backwards_problem} first, then \cref{thm:asymptotic_convergence_bdd_domain}.

Starting with $u_0(x,t)$, by the Plancherel Theorem, $L_2$ convergence in physical space is equivalent to $L_2$ convergence in the Fourier domain. So, to show that \begin{equation}
    \lim_{t\rightarrow\infty}\Vert t^\alpha \Gamma(1-\alpha) (-\Delta) u_{0}(\cdot,t) - v_0(\cdot) \Vert_{L_2(\R^n)} = 0
\end{equation} it suffices to show that
\begin{equation}
    \lim_{t\rightarrow \infty} \int_{\R^n} 
    \left \vert 
    t^\alpha  \Gamma(1-\alpha) k^2\tilde u_0(k,t) - \tilde v_0(k)
    \right \vert^2 dk= 0.
\end{equation}
By \cref{thm:spectral_rep_u_xi}, we can factor out the initial data:
\begin{align*}
    \int_{\R^n} 
    \left \vert 
    t^\alpha  \Gamma(1-\alpha) k^2\tilde u_0(k,t) - \tilde v_0(k)
    \right \vert^2 dk =& \int_{\R^n} \left \vert  \tilde v_0(k)\right \vert^2
    \left \vert 
    t^\alpha  \Gamma(1-\alpha) k^2 e_{\alpha,1}(t;k^2) -1
    \right \vert^2 dk.
\end{align*}
Rearranging the recurrence relation \cref{eq:e_RR} with $\lambda=k^2$,
\begin{equation}
     t^{\alpha-\beta+1}  \Gamma(-\alpha + \beta) k^2 e_{\alpha,\beta}(t;k^2) =   t^{\alpha-\beta+1}  \Gamma(-\alpha + \beta) k^2 \left ( \frac{t^{-\alpha+\beta-1}}{k^2 \Gamma(-\alpha+\beta)} - k^{-2}  e_{\alpha,\alpha-\beta}(t;k^2)\right ),
\end{equation}
 and taking $\beta=1$ we get that
\begin{equation} \label{eq:unexpected_e_E_relation}
    t^\alpha  \Gamma(1-\alpha) k^2 e_{\alpha,1}(t;k^2) -1=
   - t^\alpha \Gamma(1-\alpha )e_{\alpha,1-\alpha}(t;k^2) = -\Gamma(1-\alpha) E_{\alpha,1-\alpha}(-k^2 t^\alpha).
\end{equation}
Because $ e_{\alpha,1-\alpha}(t)\propto t^{-2\alpha} $  as $t\rightarrow\infty$  (recall \cref{eq:asymptotic_of_e}), for each value of $k^2>0 $ 
    \begin{equation} \label{eq:integrand_goes_zero}
        \lim_{t\rightarrow\infty}t^{\alpha } \Gamma(1-\alpha) e_{\alpha,1-\alpha}(t;k^2) \propto \lim_{t\rightarrow\infty} t^{-\alpha} = 0
    \end{equation}
    for $\alpha\in (0,1)\cup(1,2).$
    Moreover, because $E_{\alpha,1-\alpha}(0) = \frac{1}{\Gamma(1-\alpha)} <\infty$ it follows  by continuity that $|E_{\alpha,1-\alpha}(-k^2 t^\alpha)|$ has a definite upper bound $B_0$ for $t\geq0$, independent of $k$. Thus, by the \textit{dominated convergence theorem} (with dominating function $(B_0)^2 |\tilde v_0(k) |^2$), we have that
 \begin{equation}
      \lim_{t\rightarrow \infty}\int_{\R^n} \left \vert  \tilde v_0(k)\right \vert^2
    \left \vert \Gamma(1-\alpha) E_{\alpha,1-\alpha} (-k^2 t^\alpha)  \right\vert^2 dk = \int_{\R^n}  \lim_{t\rightarrow \infty}\left \vert  \tilde v_0(k)\right \vert^2
    \left \vert \Gamma(1-\alpha) E_{\alpha,1-\alpha} (-k^2 t^\alpha)  \right\vert^2 dk  =0.
 \end{equation}
The same reasoning applies for $u_1(x,t)$, except with $\beta=2$ instead of $\beta=1$.  For the Riemann-Liouville problem, where $\gamma=\alpha-1,\alpha-2$, there is an additional step.  Focusing on $\gamma=\alpha-1$,  we must show that
\begin{align*}
     \int_{\R^n} \left \vert  \tilde v_{\alpha-1}(k)\right \vert^2
    \left \vert - t^{\alpha+1}\Gamma(-\alpha) k^4 \, e_{\alpha,\alpha}(t;k^2) - 1 \right\vert^2 dk 
\end{align*}
goes to zero.
Similar to \cref{eq:unexpected_e_E_relation}, we can use the recurrence relation (\cref{eq:e_RR}) twice to get
\begin{align*}
    - t^{\alpha+1}\Gamma(-\alpha) k^4 \, e_{\alpha,\alpha}(t;k^2) - 1  =& - t^{\alpha+1}\Gamma(-\alpha) k^4 \left ( \,0 - \frac{t^{-\alpha-1}}{ k^4\Gamma(-\alpha)}  + k^{-4} e_{\alpha,-\alpha}(t;k^2)\right )-1
    \\
    =&-\Gamma(-\alpha) E_{\alpha,-\alpha}(-k^2 t^\alpha),
\end{align*}
which, like \cref{eq:unexpected_e_E_relation}, is also bounded  for all $t\geq0$ and goes to zero for every $k^2>0$ as $t\rightarrow \infty$ -- at which point the dominated convergence theorem applies. The case of $\gamma = \alpha-2$ proceeds in the same manner.
\end{proof}

Next we prove  \cref{thm:asymptotic_convergence_bdd_domain} using the same methods.
\begin{proof}
By assumption, the Laplacian in bounded $\Omega$ has a discrete spectrum of positive eigenvalues $\{\lambda_j\}$ and orthonormal eigenfunctions $\{\phi_j(x)\}$, with $0< \lambda_0 \leq \lambda_1 \leq\cdots$. For the spectral representation of a function $f(x)$, we take \begin{equation}
    \tilde f(j) = \langle f(\cdot),\phi_j(\cdot)\rangle_{L_2(\Omega)}.
\end{equation}
Similarly to before, we apply the Plancherel/Parseval theorem in bounded domains to the case of $\gamma = 0$ (recall \cref{def:anom_diff_u_xi})
\begin{align*}
    \left \Vert t^\alpha u_0(\cdot,t)- \frac{1}{\Gamma(1-\alpha)}(-\Delta)^{-1}v_0(\cdot) \right \Vert _{L_2(\Omega)}^2
    =& \sum_{j=0 }^\infty \left \vert t^\alpha  \tilde u_0(j,t)- \frac{1}{\Gamma(1-\alpha)} (\lambda_j)^{-1} \tilde v_0(j)
    \right \vert ^2
    \\
     =& \sum_{j=0 }^\infty
     \left \vert\frac{\tilde v_0(j)}{\lambda_j \Gamma(1-\alpha)} \right \vert^2
     \left \vert t^\alpha \lambda_j \Gamma(1-\alpha)   e_{\alpha,1}(t;\lambda_j)- 1
    \right \vert ^2.
\end{align*}
The series approaches zero in the limit as $t\rightarrow\infty$, by the dominated convergence theorem  with  dominating function $(B_0)^2 \left \vert\frac{\tilde v_0(j)}{\lambda_j \Gamma(1-\alpha)} \right \vert^2$. This would also work in $\R^n$ provided that  $\frac{\tilde v_0(k) }{k^2}\in L_2(\R^n)$. 
The cases for $\gamma=1,\alpha-1,\alpha-2$ proceed in the same manner, with dominating functions proportional to  $\left \vert\frac{\tilde v_1(j)}{\lambda_j \Gamma(2-\alpha)} \right \vert^2$, $\left \vert\frac{\tilde v_{\alpha-1}(j)}{\lambda_j^2 \Gamma(-\alpha)} \right \vert^2$, and $\left \vert\frac{\tilde v_{\alpha-2}(j)}{\lambda_j^2 \Gamma(-1-\alpha)} \right \vert^2 $ respectively.    
\end{proof}

The asymptotic freeze-out described by \cref{thm:asymptotic_convergence_bdd_domain}, and reconstructions of initial data described by \cref{thm:backwards_problem}  for $\gamma=0$ are demonstrated in \cref{fig:caputo_freezeout_summary}. There we depict the solutions of $\alpha=0.9$ and $\alpha=1.75$ fractional diffusion, showing the approximations \ref{eq:apx1} to \ref{eq:apx4} 
become more accurate at long times and that even non-smooth initial data information is preserved.

\begin{figure}
     \centering
     \includegraphics[width=1.0\linewidth]{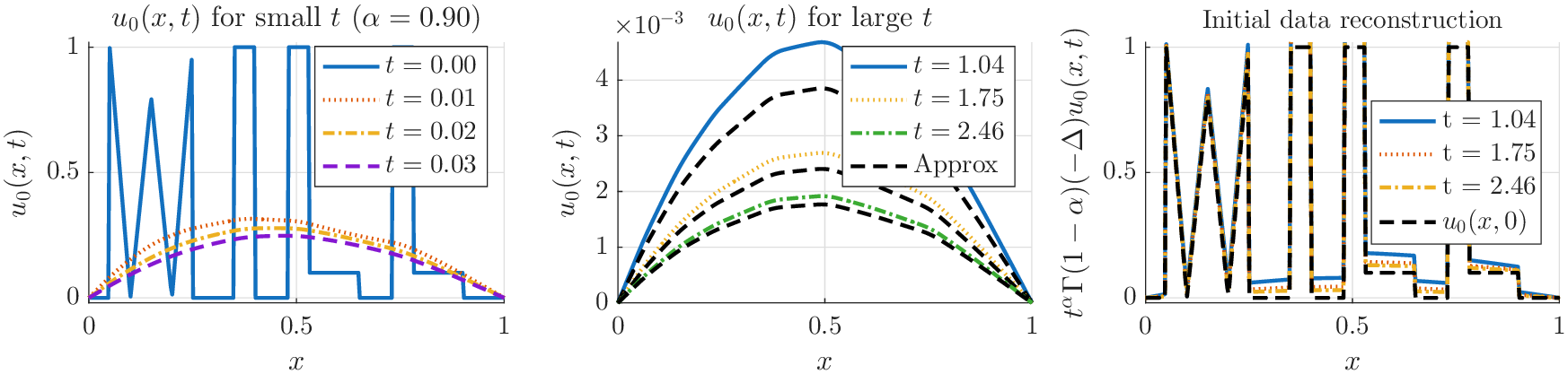}
    \\    \includegraphics[width=1.\linewidth]{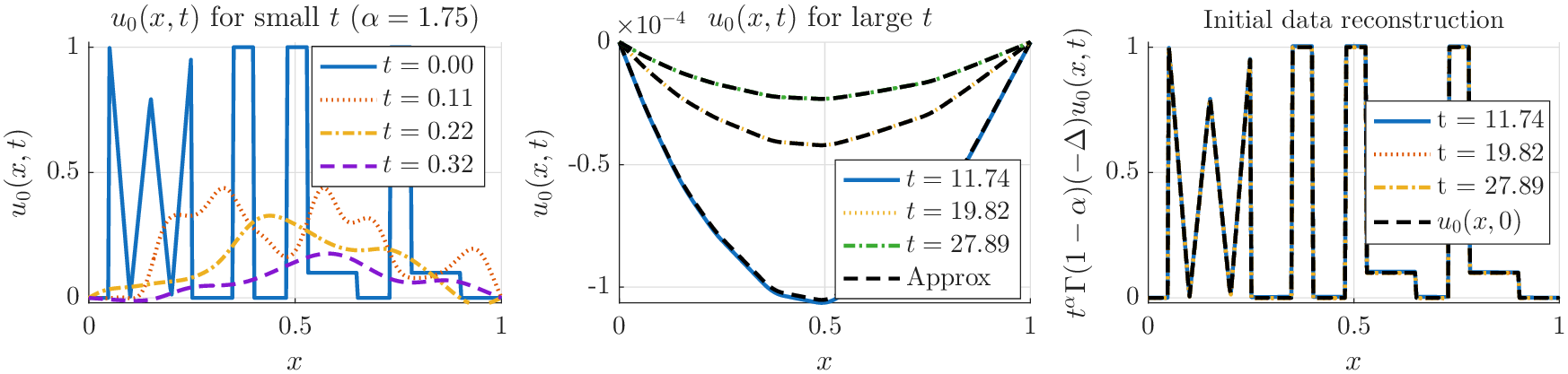}
     
     \caption{ Plots demonstrating \cref{thm:asymptotic_convergence_bdd_domain,thm:backwards_problem} for $u_\gamma(x,t)$ and $\gamma=0$ (see \cref{def:anom_diff_u_xi}), 
      with Dirichlet boundary conditions at $x=0$ and $x=1$. The top row corresponds to $\alpha = 0.9$ subdiffusion, and the bottom row corresponds to $\alpha=1.75$ superdiffusion. The left panels depict the solution for time $t=0$ (the initial data) and a few early moments.  The middle panels depict the same solution at later times, plotted with the approximation $\frac{t^{-\alpha}}{\Gamma(1-\alpha)} (-\Delta)^{-1} v_0(x)$ (dashed lines).
     The right panels show $t^{\alpha}\Gamma(1-\alpha)(-\Delta) u_0(x,t)$ at a few later times, showing the reconstruction of the initial data at later times (see \cref{fig:noisy_reconstruction} for initial data reconstruction with noise). 
     }
     \label{fig:caputo_freezeout_summary}
 \end{figure}

\subsection{Underlying memory kernel mechanism}
\label{subsection:memory_kernels}

The proofs above obscure the mechanisms behind this asymptotic freeze-out relying only on the already known asymptotic behavior of the Mittag-Leffler function. Why does the Caputo memory artifact feature $(-\Delta)^{-1}$ while the Riemann-Liouville features $(-\Delta)^{-2}$? Why does the time-dependence take the form it does? Can these memory effects be found if a different memory kernel is used? 

Here, we examine the asymptotic properties of anomalous diffusion with a more general memory kernel.
A causal convolution is defined as
\begin{equation}
    (f*g) (t) = \int_0^t f(\tau) g(t-\tau)d\tau,
\end{equation}
with the property that
\begin{equation}
    \El[f*g](s) = \El[f](s) \El[g](s).
\end{equation}
The fractional integral, given in \cref{def:fractionalIntegral}, is a convolution against a power kernel:
\begin{equation}
    \J[]{\nu}{}[f](t) := \frac{1}{\Gamma(\nu)}\int_0^t f(\tau) (t-\tau)^{\nu-1}d\tau = \frac{(\cdot)^{\nu-1}}{\Gamma(\nu)} * f.
\end{equation}
We generalize fractional diffusion by replacing $\frac{t^{\nu-1}}{\Gamma(\nu)}$ with a more generic memory kernel $\eta(t)$. These generalizations, among others, are explored in \cite{sandevGeneralizedDiffusionwaveEquation2019,al-refaiGeneralisingFractionalCalculus2023,kemppainenDecayEstimatesTimefractional2014} and elsewhere.

We consider a Caputo-type memory kernel anomalous diffusion equation as
\begin{equation} \label{eq:caputo_eta_diffusion_def}
    \left[\int_0^t \eta(t-\tau)  D_\tau [u](x,\tau) d\tau \right] - \Delta u(x,t) = 0 
\end{equation}
with $u(x,0) = v_0(x)$.
(Caputo-type means that the time derivative is applied to $u$ prior to the convolution.) Taking Fourier in $x$ ($x \rightarrow k$) and Laplace in $t$ ($t \rightarrow s)$ transforms of \cref{eq:caputo_eta_diffusion_def} (with $\eta(t) \xrightarrow{\El}M(s)$) gives the following:
\begin{equation}
    M(s) (s\tilde U(k,s) - \tilde v_0(k)) + k^2 \tilde U(k,s) = 0.
\end{equation}
where $M(s)$ is the Laplace transform of $\eta$ and 
$\tilde U(k,s)$ is Fourier-Laplace transform of $u$.
Solving for $\tilde U(k,s)$ gives
\begin{equation} \label{eq:caputo_eta_solution}
    \tilde U(k,s) = \tilde v_0(k)\frac{M(s)}{sM(s) +k^2}.
\end{equation}
Ignoring any poles, the long-time behavior of $\tilde u(k,t)$ depends on $M(s)$ near $s=0$. 
For reasonable $\eta(t)$, it holds that 
\begin{equation}
    \lim_{s\rightarrow0^+} s M(s)  = \lim_{t\rightarrow\infty} \eta(t).
\end{equation}
Consider three basic limiting cases of $\eta(t)$ as $t\rightarrow \infty$ with limits equal to $ 0, R,\infty$.  Expanding $\frac{M(s)}{sM(s) +k^2}$ to a few terms in $s$ for each case gives
\begin{equation}
    \tilde U_{caputo}(k,s) \approx \tilde v_0(k)\begin{cases}
        k^{-2}  M(s) - s M(s)^2 k^{-4} + \mathcal O(s^2M^3 k^{-6})
        &\text{for }\lim_{t\rightarrow\infty} \eta(t) = 0
        \\
         \frac{M(s)}{R+k^2} - \frac{M(sM-R)}{(R+k^2)^2 }+\mathcal O ( (sM-R)^2M)
         &\text{for }\lim_{t\rightarrow\infty} \eta(t) = R
         \\
         s^{-1} - \frac{k^2}{s^2 M(s)}+ \mathcal{O}(\frac{k^4}{s^3M^2})
         &\text{for }\lim_{t\rightarrow\infty} \eta(t) = \infty
    \end{cases}
\end{equation}
Conversely, consider a Riemann-Liouville type memory kernel anomalous diffusion equation (time derivative taken \textit{after} memory kernel convolution):
\begin{equation} \label{eq:RL_eta_anom_diffusion}
\left \{
  \begin{aligned}    
     & D_t \left[\int_0^t \eta(t-\tau)u(x,\tau) d\tau \right] - \Delta u(x,t) = 0 
      \\
      &\lim_{t_\rightarrow0^+} \int_0^t \eta(t-\tau) u(x,\tau) d\tau = v_\eta(x).
  \end{aligned} \right.
\end{equation}
Note that for nonzero $v_{\eta}(x)$, $u(x,t)$ or $\eta(t)$ must be singular as $t\rightarrow0$.
Applying Fourier and Laplace transforms,
\begin{align*}
    0 
    =&\El[ D_t \eta*\tilde u] + k^2 \El{\tilde u}
    \\
    0=&s M(s) \tilde U (k,s) - \tilde v_{\eta}(k) + k^2 \tilde U
    \\
    \tilde U(k,s) =& \tilde v_\eta(k) \frac{1}{sM(s) + k^2}.
\end{align*}
This form is similar to \cref{eq:caputo_eta_solution}, but without $M(s)$ in the numerator. So for $s\rightarrow0$ the Riemann-Liouville type solution behaves as
\begin{equation}
    \tilde U_{RL}(k,s) \approx \tilde v_\eta(k)\begin{cases}
        k^{-2}   - s M(s)  k^{-4} + \mathcal O(s^2M^2 k^{-6})
        &\lim_{t\rightarrow\infty} \eta(t) = 0
        \\
         \frac{1}{R+k^2} - \frac{sM-R}{(R+k^2)^2 }+\mathcal O ( (sM-R)^2)
         &\lim_{t\rightarrow\infty} \eta(t) = R
         \\
         \frac{1}{sM(s)}- \frac{k^2}{(s M(s))^2}+ \mathcal{O}(\frac{k^4}{(sM)^3})
         &\lim_{t\rightarrow\infty} \eta(t) = \infty.
    \end{cases}
\end{equation}

Taking $\zeta(t) = \El^{-1}[ s^{-1} M(s)^{-1}](t)$, and recalling that $\El^{-1}[1] = \delta(t)$ (and hence does not contribute as $t\rightarrow \infty)$, we obtain the leading asymptotic terms in these solutions 
in $(x,t)$ as $t\rightarrow\infty$; they are given in \cref{tab:mem_ker_lims}.
\begin{table}[]

    \label{tab:mem_ker_lims}
 \renewcommand{\arraystretch}{1.25}
\begin{tabular}{|c|c|c|}
      \hline   $\lim_{t\rightarrow \infty} \eta(t)$ & Caputo asymptotic & Riemann-Liouville asymptotic   \\
        \hline  $0$&$u(x,t) \rightarrow \eta(t) (-\Delta)^{-1} v_0(x)$& $ u(x,t) \rightarrow -\dot \eta(t)(-\Delta)^{-2}v_\eta(x)$ 
        \\
        \hline $R$ & $u(x,t)\rightarrow \eta(t) (R-\Delta)^{-1} v_0(x)$ & $u(x,t) \rightarrow-  \dot \eta(t) (R -\Delta)^{-2} v_\eta(x) $
        \\
        \hline $+\infty$ & $u(x,t) \rightarrow  v_0(x)$ & $u(x,t) \rightarrow \zeta(t) v_\eta(x)$
        \\
    \hline
\end{tabular}
 \caption{Asymptotic behavior  (not accounting for any pole contributions) for solutions of anomalous diffusion with memory kernel $\eta(t)$ for the memory-kernel generalizations of Caputo (see \cref{eq:caputo_eta_diffusion_def}) and Riemann-Liouville (see \cref{eq:RL_eta_anom_diffusion}) problems. $\zeta(t)$ is defined by $\zeta(t) = \El^{-1}[ s^{-1} M(s)^{-1}](t)$, where $M(s) = \El[\eta](s).$ }
\end{table}
So long as the memory kernel $\eta(t)$ does not reach $0$ in finite time, the relations in \cref{tab:mem_ker_lims} can be inverted to recover the initial data, see \cref{thm:backwards_problem}.

The derived asymptotic factorization described in this paper is not unique to fractional diffusion but is a memory effect which generalizes to a broader class of problems with convolution memory kernels. 
The time-dependent factor in \cref{eq:apx1,eq:apx2,eq:apx3,eq:apx4} is just the memory kernel (or its derivative/integral), and generally does not need to be a power law. The spatial factor, inverse Laplacian (or bi-Laplacian) of the initial data, results from the series expansion in the Laplace domain, and does not depend on specifics of the memory kernel, so long as the memory kernel has the same limit as $t\rightarrow\infty$. This means that the initial-data preservation found in \cref{thm:backwards_problem} should be an expected feature of a broad class of memory influenced models.


\subsection{Comparison with standard diffusion}
\label{subsection:compare_standard_diff}

It is worth comparing the methodology for the inverse problem here (\cref{thm:backwards_problem}) to what happens for the standard heat equation (which is exponentially ill-posed). Using $u(x,t)$ for the solution to Caputo fractional diffusion $\alpha<1$ and $w(x,t)$ for standard diffusion, the differential equations are as follows:
\begin{align}
    D_t w =& \Delta  w & w (x,0) &= v_0(x)
    \\
    \caputo[]{\alpha   }{t}u =& \Delta u  & u(x,0) &= v_0(x).
\end{align}
We can take the solutions at time $t$ to be
\begin{align}
    w(x,t) =& e^{-t(-\Delta)} v_0(x) \label{eq:diff_inverse_p}
    \\
    u(x,t) =& e_{\alpha}\big(t;(-\Delta)\big) v_0(x) \label{eq:subdiff_inverse_p}
\end{align}
(in the sense that  $(-\Delta) \rightarrow k^2$ after the Fourier transform).
Recovering the initial data $v_0$ from \cref{eq:diff_inverse_p,eq:subdiff_inverse_p} for diffusion and subdiffusion, respectively, we have 
\begin{align}
    v_0(x) =& e^{+t (-\Delta)} w(x,t) \label{eq:exp_lap}
    \\
    v_0(x) =&   e_{\alpha,1}\big(t;(-\Delta)\big)  ^{-1} u(x,t).
\end{align}

The operator in \cref{eq:exp_lap}, $e^{+t(-\Delta)}$ exponentially amplifies higher frequencies, making it ill-defined even on most smooth $L_2$ functions. Arbitrarily small noise of high enough frequency can completely overtake the reconstruction even on short timescales, demonstrating how ill-posed the inverse problem for diffusion is.
Although the subdiffusion case does not have the time-translation-invariant property that $e^{t_1}e^{t_2} = e^{t_1+t_2},$ it does not suffer from this exponential-amplification defect. 
In the subdiffusion case, we see through \cref{eq:asymptotic_of_e} that, asymptotically,
\[
e_{\alpha,1}\big(t;(-\Delta)\big) \approx \frac{t^{-\alpha}}{\Gamma(1-\alpha)} (-\Delta)^{-1} - \mathcal O \left(t^{-2\alpha} (-\Delta)^{-2}\right)
\]
so that 
\begin{equation}
     e_{\alpha,1}\big(t;(-\Delta)\big) ^{-1} \approx t^{\alpha}\Gamma(1-\alpha) (-\Delta) \bigg (1- \mathcal O \left (t^{-\alpha} (-\Delta)^{-1}\right ) \bigg)^{-1} \approx t^{\alpha}\Gamma(1-\alpha) (-\Delta)+ \mathcal O(1).
\end{equation}
So for large $t$, $  e_{\alpha,1}\big(t;(-\Delta)\big)^{-1} $ amplifies higher frequencies only linearly in $k^2$, versus exponential amplification for $e^{t(-\Delta)}$. This algebraic amplification in $k$ (quadratic for Caputo, and quartic in the Riemann-Liouville case) compared to the exponential amplification in $k$ for the standard diffusion makes a tremendous difference for the inverse problem with noise, which can be seen in \cref{fig:noisy_reconstruction} and is also touched on in \cite{liuBackwardProblemTimefractional2010}.

\begin{figure}
    \centering
    \includegraphics[width=0.9\linewidth]{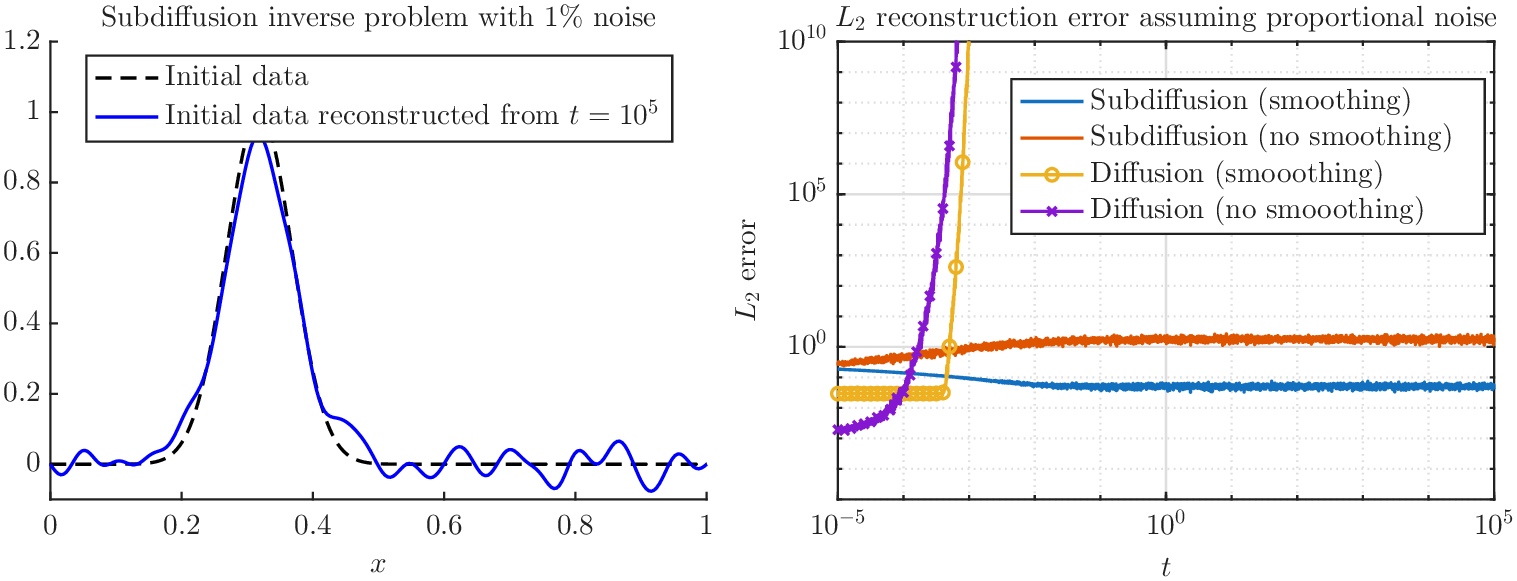}
    \caption{
    Comparison of the stability of the inverse problem for ($\alpha=0.45$ Caputo) anomalous diffusion versus standard diffusion in the presence of $1\%$ measurement noise.  With the same gaussian initial data, at each timestep the solution to the respective equation was calculated, white noise proportional to the $L_2$ norm of the solution at that time was added, and the inverse procedure in \cref{eq:subdiff_inverse_p,eq:diff_inverse_p} was applied following smoothing via gaussian convolution. The plot on the left shows the
    reconstruction results of the initial data for subdiffusion at time $t=10^5$ with $1\%$ noise. %
    The plot on the right shows the $L_2$ error of the reconstructed initial data (with and without gaussian smoothing) for both computed cases. Note that for standard diffusion the reconstruction process diverges rapidly for even small times, while even the \textit{unsmoothed} reconstruction of the subdiffusion solution is relatively well-behaved.
    }
    \label{fig:noisy_reconstruction}
\end{figure}

 \newpage

\section{Characteristic timescales for Mittag-Leffler functions } 
\label{section:Characteristic_time_scales}

 \begin{figure}
     \centering
     \begin{tikzpicture}[scale=.9, decoration={
    markings,
    mark=at position 0.2 with {\arrow{Stealth}},
    mark=at position 0.3 with {\arrow{Stealth}},
    mark=at position 0.4 with {\arrow{Stealth}},
    mark=at position 0.5 with {\arrow{Stealth}},
    mark=at position 0.6 with {\arrow{Stealth}},
    mark=at position 0.7 with {\arrow{Stealth}},
    mark=at position 0.8 with {\arrow{Stealth}}
    mark=at position 0.9 with {\arrow{Stealth}},
    mark=at position 0.99 with {\arrow{Stealth}},}
]

    \def\Rad{3}       
    \def\g{0.07} 
    \def\eps{0.1}   
    \def\ang{15}    

    \draw[->] (-3.5,0) -- (1.5,0) node[right] {Re($s$)};
    \draw[->] (0,-3.5) -- (0,3.5) node[above] {Im($s$)};

    \draw[thick, blue, postaction={decorate}] 
        (\g, -\Rad) -- (\g, \Rad)           
        arc[start angle=90, end angle=180, radius=\Rad -\eps] 
        -- (0, \eps)                             
        arc[start angle=90, end angle=-90, radius=\eps] 
        -- (-\Rad + \eps+\g, -\eps)                            
        arc[start angle=180, end angle=270, radius=\Rad-\eps] 
        -- cycle;

    \draw[red, line width=1.5pt, dashed] (0,0) -- (-3.2,0);

    \filldraw[black] (-0.8, 1.2) circle (1.5pt) node[right] {$s_+$};
    \filldraw[black] (-0.8, -1.2) circle (1.5pt) node[right] {$s_-$};

    \node[blue, right] at (\g, 1.5) {$C_\text{Brom}$};
    \node[blue, above right] at (1.5, 2.5) {$C_R$};
    \node[blue, above left] at (-2,2) {$C_{+i}$};
    \node[blue, below left] at (-2,-2) {$C_{-i}$};
    \node[blue, above] at (-1.5, \eps) {$C_{\text{cut}}$};

\end{tikzpicture}
     \caption{The contour $C_{R}$ used in the proof of \cref{thm:mittag_leff_representation}.
     For $\alpha\in(1,2)$, $\frac{s^{\alpha-\beta}}{s^\alpha+1}$ has poles at $s_{\pm} =  e^{\pm i\frac{\pi}{\alpha}}$, but for $\alpha \in (0,1)$ there are no poles (and so they would not be present in this diagram). The integral on $C_{\pm i}$ goes to zero in the limit as $R\rightarrow \infty$, so the integral along $C_\text{Brom}$ ('Brom' for Bromwich) is equal to pole contributions minus the integral along $C_{\text{cut}}$.
     }
     \label{fig:contour}
 \end{figure}

Though we proved convergence as $t\rightarrow\infty$ in the preceding section, we gave no indication at what time scales the approximations \cref{eq:apx1,eq:apx2,eq:apx3,eq:apx4} are accurate. 
We will presently take $\lambda=1$ for simplicity, but because of \cref{eq:relate_e_e_lambda}
\begin{equation*}
    e_{\alpha,\beta}(t;\lambda) = \lambda^{\frac{1-\beta}{\alpha}   } e_{\alpha,\beta}(\lambda^{1/\alpha} t)
\end{equation*}
we will see in \cref{subsection:timescale_and_lambda} that timescales scale with $\lambda^{-1/\alpha}$.

Recall the recurrence relation (\cref{lemma:recurrence_relation}), that
\begin{equation} 
    e_{\alpha,\beta}(t) = \frac{t^{-\alpha+\beta-1} }{ \Gamma(-\alpha+\beta)} -  e_{\alpha,\beta-\alpha}(t).
\end{equation}
From the asymptotic series \cref{eq:asymptotic_of_e}, there must be a timescale $t_*$ after which $ \left| \frac{t^{-\alpha+\beta-1} }{\Gamma(-\alpha+\beta)} \right | \gg \left | e_{\alpha,\beta-\alpha}(t) \right |$.  Such timescales are the focus of this section.
Using an alternate representation,
we will bound components of $e_{\alpha,\beta-\alpha}(t)$, and determine when $\frac{t^{-\alpha+\beta-1} }{\Gamma(-\alpha+\beta)}$ dominates over those bounds. We start with the alternative representation.
\begin{theorem} \label{thm:mittag_leff_representation}
    For $t>0,$ $\alpha\in (0,1)\cup(1,2)$, and $\beta<\alpha-1$ the Mittag-Leffler function can be represented as \begin{equation}
        e_{\alpha,\beta}(t ) = \begin{cases}
            f_{\alpha,\beta}(t) & \alpha \in (0,1)
            \\
            f_{\alpha,\beta}(t) + g_{\alpha,\beta}(t) & \alpha \in (1,2),
        \end{cases}
    \end{equation}
where 
\begin{align}
    f_{\alpha,\beta}(t) =& \frac{-1}{\pi}\int_{0}^\infty e^{-zt} z^{\alpha-\beta}\text{Im}  \frac{e^{i \pi (\alpha-\beta)}}{z^{\alpha  }e^{i \pi \alpha}+1} dz \label{eq:def_f}
    \\
     g_{\alpha,\beta}(t ) =& \frac{2}{\alpha} e^{t  \cos(\pi/\alpha)} \cos
        \left( \sin (\pi/\alpha)  t - (\beta-1) \frac{\pi}{\alpha} \right). \label{eq:def_g}
\end{align}
\end{theorem}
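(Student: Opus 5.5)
We plan to invert the Laplace transform \cref{eq:laplace_of_e} with $\lambda=1$, writing
\[
e_{\alpha,\beta}(t)=\frac{1}{2\pi i}\int_{C_\text{Brom}} e^{st}\frac{s^{\alpha-\beta}}{s^\alpha+1}\,ds ,
\]
with the principal branch of $s^{\alpha}$ and $s^{\alpha-\beta}$ cut along the negative real axis. We then deform the Bromwich line into the closed contour of \cref{fig:contour}. By the residue theorem, the Bromwich integral equals the sum of the residues inside the contour minus the contributions of the arcs $C_{\pm i}$, the small circle around the origin, and the two banks $C_\text{cut}$ of the branch cut. The hypothesis $\beta<\alpha-1$ is used twice. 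First, it ensures $\beta<\alpha+1$, so \cref{eq:laplace_of_e} holds. Second, $\alpha-\beta>1>0$ makes the integrand $O(|s|^{\alpha-\beta})$ near $0$, so the small circle contributes $O(\varepsilon^{\alpha-\beta+1})\to0$. On the large arcs the integrand is $O(R^{-\beta})$ times $e^{st}$ with $\operatorname{Re}s\le \gamma$. A Jordan-type estimate then shows these arcs vanish for $t>0$: the left-half-plane portion is controlled by $e^{R t\cos\theta}$, and the short pieces near $\operatorname{Re} s\in[0,\gamma]$ have length $O(1)$.

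\textbf{Branch cut.} On the upper and lower banks we parametrize $s=ze^{\pm i\pi}$ with $z\in(\varepsilon,R)$, so that $s^{\alpha}=z^{\alpha}e^{\pm i\pi\alpha}$ and $s^{\alpha-\beta}=z^{\alpha-\beta}e^{\pm i\pi(\alpha-\beta)}$. Combining the two banks with their orientations, the integrand values are complex conjugates of each other. The difference therefore becomes $2i\,\operatorname{Im}$ of the upper-bank value, and after dividing by $2\pi i$ and fixing the sign from orientation we expect exactly
\[
-\frac1\pi\int_0^\infty e^{-zt}z^{\alpha-\beta}\operatorname{Im}\frac{e^{i\pi(\alpha-\beta)}}{z^\alpha e^{i\pi\alpha}+1}\,dz=f_{\alpha,\beta}(t).
\]
Convergence at $z=0$ again follows from $\alpha-\beta>0$, and at $\infty$ from $e^{-zt}$. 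We also need the denominator never to vanish on the cut. Its imaginary part is $z^\alpha\sin(\pi\alpha)$, which is nonzero for $z>0$ because $\alpha\notin\Z$.

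\textbf{Poles.} For $\alpha\in(0,1)$ the equation $s^\alpha=-1$ would require $|\arg s|=\pi/\alpha>\pi$, so there are no poles on the principal sheet and $e_{\alpha,\beta}=f_{\alpha,\beta}$. For $\alpha\in(1,2)$ the poles are $s_\pm=e^{\pm i\pi/\alpha}$, which lie in the left half-plane since $\pi/\alpha\in(\pi/2,\pi)$. Both are simple, with
\[
\operatorname{Res}_{s_\pm}=\frac{e^{s_\pm t}s_\pm^{\alpha-\beta}}{\alpha s_\pm^{\alpha-1}}=\frac1\alpha e^{s_\pm t}s_\pm^{1-\beta}=\frac1\alpha e^{t\cos(\pi/\alpha)}e^{\pm i(t\sin(\pi/\alpha)-(\beta-1)\pi/\alpha)} .
\]
Adding the two conjugate residues gives exactly $g_{\alpha,\beta}(t)$ from \cref{eq:def_g}.

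The main obstacle is bookkeeping rather than analysis. It consists of getting the orientation and sign of the cut contribution right so that it matches the stated $-\frac1\pi\operatorname{Im}$ form, and of rigorously justifying that the arcs vanish uniformly, including near the imaginary axis. I would check the sign by testing a known case, for instance by comparing the small-$t$ or large-$t$ behaviour of $f_{\alpha,\beta}$ against the leading term of \cref{eq:asymptotic_of_e} via Watson's lemma.
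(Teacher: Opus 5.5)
Your route is the paper's own: Bromwich inversion of \cref{eq:laplace_of_e}, closing to the left, residues at $s_\pm$ giving $g_{\alpha,\beta}$, and the two banks giving $f_{\alpha,\beta}$. Several details are right: the residues, the non-vanishing of $z^\alpha e^{i\pi\alpha}+1$, and the small-circle estimate. The sign also comes out as you hoped. With $F(s)=s^{\alpha-\beta}/(s^\alpha+1)$, the banks combine to $\frac{1}{2\pi i}\int_0^\infty e^{-zt}\bigl[F(ze^{-i\pi})-F(ze^{i\pi})\bigr]dz=-\frac{1}{\pi}\int_0^\infty e^{-zt}\operatorname{Im}F(ze^{i\pi})\,dz=f_{\alpha,\beta}(t)$.

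The gap is the arc step, and it is not a technicality. Your bound $|F(s)|=O(R^{-\beta})$ gives, via Jordan's lemma, $O(R^{-\beta}/t)$ on the left-half-plane part and $O(\gamma e^{\gamma t}R^{-\beta})$ on the short pieces. These tend to zero only if $\beta>0$. But the hypothesis $\beta<\alpha-1$ forces $\beta<0$ for every $\alpha\in(0,1)$, and allows $\beta\le 0$ for $\alpha\in(1,2)$; there the arcs genuinely do not vanish.

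In fact your first line already fails. A classical Laplace transform tends to $0$ as $s\to+\infty$, but $F(s)\sim s^{-\beta}$ does not, so \cref{eq:laplace_of_e} is not a classical identity for $\beta\le 0$. The range $\beta<\alpha+1$ quoted there is not the right condition. For example, $\El[e_{\alpha,0}](s)=-1/(s^\alpha+1)=F(s)-1$, and for non-integer $\beta<0$ the leading term $t^{\beta-1}/\Gamma(\beta)$ is not integrable at $t=0$. Accordingly, the truncated Bromwich integral does not converge as $R\to\infty$; only its sum with the arcs does. The paper's proof has the same hole, since it simply asserts that the integrals over $C_{\pm i}$ vanish.

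To repair it, run your argument only for $0<\beta<\alpha+1$. There $e_{\alpha,\beta}$ is locally integrable, \cref{eq:laplace_of_e} is a genuine Laplace transform, and Jordan's lemma applies. Then remove the restriction; there are several ways.
\begin{itemize}
\item \textbf{Differentiation ladder.} Termwise, with $1/\Gamma(-j)=0$, one has $\frac{d}{dt}e_{\alpha,\beta+1}=e_{\alpha,\beta}$. One checks directly that $\frac{d}{dt}f_{\alpha,\beta+1}=f_{\alpha,\beta}$: the factor $-z$ from differentiating $e^{-zt}$ combines with $e^{i\pi(\alpha-\beta-1)}=-e^{i\pi(\alpha-\beta)}$. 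Likewise $\frac{d}{dt}g_{\alpha,\beta+1}=g_{\alpha,\beta}$, since $g_{\alpha,\beta}=\frac1\alpha\sum_\pm e^{ts_\pm}s_\pm^{1-\beta}$. Differentiating $m$ times, with $m$ chosen so that $\beta+m\in(0,\alpha+1)$, gives the claim for every $\beta<\alpha+1$.
\item \textbf{Analytic continuation in $\beta$.} For fixed $t>0$, all three functions are analytic on $\operatorname{Re}\beta<\alpha+1$, once $f$ is written as the bank difference above.
\item \textbf{Hankel's formula.} You can bypass the Bromwich line entirely: $\frac{t^{\mu-1}}{\Gamma(\mu)}=\frac{1}{2\pi i}\int_{Ha}e^{st}s^{-\mu}\,ds$ holds for all $\mu$. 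Summing the series termwise on a Hankel loop lying in $|s|\ge\rho>1$ yields $e_{\alpha,\beta}(t)=\frac{1}{2\pi i}\int_{Ha}e^{st}F(s)\,ds$ for every real $\beta$. Collapsing the loop onto the cut picks up the residues at $s_\pm$, and the small circle vanishes because $\beta<\alpha+1$. No arcs at infinity are involved.
\end{itemize}
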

\textbf{Note:} For $\alpha \in (1,2)$  $\cos(\pi/\alpha) $ is negative, leading to exponential decay of $g_{\alpha,\beta}$.

\begin{proof}
  The Laplace transform of $e_{\alpha,\beta}(t )$ is $\frac{s^{\alpha-\beta}}{s^\alpha+1}.$ Using the inverse Laplace transform, we have:
    \begin{align}
       e_{\alpha,\beta}(t ) =  \El^{-1} \left[\frac{s^{\alpha-\beta}}{s^\alpha+1}\right] 
        =& \frac{1}{2 \pi i}\int_{\varepsilon-i\infty}^{\varepsilon+i\infty} e^{st} \frac{s^{\alpha-\beta}}{s^\alpha+1} ds.
        \end{align}
%
Consider the closed contour $C_R$ given by $C_{R} = C_{\text{Brom}} + C_{+i}+ C_{\text{cut}} +C_{-i}$, as displayed in \cref{fig:contour}. Because $C_{\text{Brom}}$ is the contour used in the inverse Laplace transform, we have that
\begin{equation}
    e_{\alpha,\beta}(t )  =  \frac{1}{2 \pi i}  \left ( \int_{C_R } - \int_{C_{\text{cut} } } - \int_{C_{+i} } - \int_{C_{-i} }   \right ) e^{st} \frac{s^{\alpha-\beta}}{s^\alpha+1} ds.
\end{equation}
In the limit as $R\rightarrow\infty$, the integrals on $C_{\pm i}$ go to zero, so we disregard them. The integral over closed contour $C_R$  is given by summing over the enclosed poles. For $\alpha<1,$ $\frac{s^{\alpha-\beta}}{s^\alpha+1}$ has no poles, but for $\alpha\in(1,2),$ it has poles at
\begin{equation}
    s_{\pm} =    e^{\pm i \pi/\alpha} = \cos(\pi/\alpha) \pm i \sin(\pi/\alpha). 
\end{equation}
Hence, for $\alpha \in (1,2)$ summing up the residues gives
\begin{align}
      \frac{1}{2 \pi i}   \int_{C_R } e^{st} \frac{s^{\alpha-\beta}}{s^\alpha+1} ds 
      =& \sum_{{\pm}} e^{ ts_{\pm}}\frac{s_{\pm}^{\alpha-\beta}}{ \alpha s_{\pm}^{\alpha-1}}
       = \sum_{{\pm}} e^{ ts_{\pm}}\frac{s_{\pm}^{-1-\beta}}{ \alpha}
       \\=&\frac{2}{\alpha} e^{t  \cos(\pi/\alpha)} \cos
        \left( \sin (\pi/\alpha)  t - (\beta-1) \frac{\pi}{\alpha} \right),
\end{align}
 which is $g_{\alpha,\beta}(t)$ as stated in \cref{eq:def_g}.
Now we simplify the integral on $C_{\text{cut}}.$
 \begin{align*}
    - \frac{1}{2 \pi i}   \int_{C_{\text{cut}} } e^{st} \frac{s^{\alpha-\beta}}{s^\alpha+1} ds 
    =& \frac{-1}{2\pi i} \left ( \int_{-\infty + 0i}^{0+0i} + \int_{0-0i}^{-\infty-0i}\right) e^{st}\frac{s^{\alpha-\beta}}{s^\alpha+1} ds 
    \\ &\text{taking $s = z e^{\pm i \pi}$, depending on the branch}
       \\ =& \frac{-1}{2\pi i} \int_\infty^0 e^{z e^{i \pi}t} \frac{(z e^{i \pi})^{\alpha-\beta}}{(z e^{i\pi})^{\alpha}+1}  e^{- i \pi} dz + \frac{-1}{2\pi i}\int_0^\infty   e^{z e^{i \pi}t}  \frac{(z e^{-i \pi})^{\alpha-\beta}}{(z e^{-i\pi})^{\alpha}+1}  e^{ i \pi} dz
       \\ =&\frac{ -1}{2\pi i}\int_0^\infty e^{-zt}  z^{\alpha-\beta} \left(   \frac{e^{i \pi(\alpha-\beta)}}{ z^\alpha e^{i \pi \alpha }+1}   -   \frac{e^{-i \pi(\alpha-\beta)}}{ z^\alpha e^{-i \pi \alpha }+1}   \right)dz
       \\
       =& \frac{-1}{\pi} \int_0^\infty e^{-zt} z^{\alpha-\beta} \text{Im}\, \frac{e^{i\pi (\alpha-\beta)}}{z^\alpha e^{i\pi \alpha}+1},
 \end{align*}
 and so \begin{equation}
     e_{\alpha,\beta}(t) = \begin{cases}
         f_{\alpha,\beta}(t) &\alpha\in (0,1)
         \\
         f_{\alpha,\beta}(t) + g_{\alpha,\beta}(t) & \alpha\in (1,2)
     \end{cases}
 \end{equation}
 as claimed.
\end{proof}

Combining this result with the recurrence relation (\cref{lemma:recurrence_relation}), we obtain that
\begin{equation}
    e_{\alpha,\beta}(t) =\begin{cases}
        \frac{t^{-\alpha+\beta-1}}{\Gamma(-\alpha+\beta)} -f_{\alpha,\beta-\alpha}(t ) & \alpha \in (0,1)
        \\
        \frac{t^{-\alpha+\beta-1}}{\Gamma(-\alpha+\beta)} -f_{\alpha,\beta-\alpha}(t ) - g_{\alpha,\beta-\alpha}(t) & \alpha \in (1,2).
    \end{cases} 
\end{equation}
Next, we find the timescales for  $\frac{t^{-\alpha+\beta-1}}{\Gamma(-\alpha+\beta)}$ dominating over $f_{\alpha,\beta-\alpha}(t)$, and dominating over $g_{\alpha,\beta-\alpha}(t).$

\subsection{Characteristic time for domination of $f_{\alpha,\beta-\alpha}(t)$}
We will derive the timescale on which $\frac{t^{-\alpha+\beta-1}}{\Gamma(-\alpha+\beta)}$ dominates over $f_{\alpha,\beta-\alpha}(t)$.

\begin{theorem} \label{thm:char_timescale_f}
    Let $\alpha\in(0,1)\cup(1,2)$, and $\beta <\alpha+1$ such that $\beta-\alpha \not \in \Z_{\leq 0}$, then with 
    \begin{equation}
        \tau(\alpha,\beta) = \begin{cases}
            \left | \frac{ \Gamma(-\alpha + \beta) \Gamma(2\alpha-\beta + 1) }{\pi } \right  | ^{1/\alpha} & \alpha \in(0,1/2)\cup(3/2,2)
            \\
            \left | \frac{ \Gamma(-\alpha + \beta) \Gamma(2\alpha-\beta + 1) }{\pi \sin(\pi\alpha)} \right  | ^{1/\alpha} & \alpha \in (1/2,3/2),
        \end{cases}
    \end{equation}
    we have 
    \begin{equation}
        |f_{\alpha,\beta-\alpha}(t)| <  \varepsilon \left | \frac{t^{-\alpha + \beta-1}}{\Gamma(-\alpha+\beta)} \right | \qquad t> \varepsilon^{-1/\alpha} \tau(\alpha,\beta).
    \end{equation}
\end{theorem}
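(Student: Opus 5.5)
We start from the representation in \cref{thm:mittag_leff_representation}, applied with second parameter $\beta' = \beta-\alpha$:
\begin{equation*}
f_{\alpha,\beta-\alpha}(t) = \frac{-1}{\pi}\int_0^\infty e^{-zt} z^{2\alpha-\beta}\, \text{Im}\, \frac{e^{i\pi(2\alpha-\beta)}}{z^\alpha e^{i\pi\alpha}+1}\, dz .
\end{equation*}
First, a technical point must be checked. \cref{thm:mittag_leff_representation} is stated for $\beta'<\alpha-1$, but here we only have $\beta' = \beta-\alpha < 1$. We will therefore re-examine that contour argument. The only place the restriction on $\beta'$ enters is the small circle of radius $\epsilon$ around the origin and the integrability of $z^{\alpha-\beta'}$ at $z=0$. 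On the small circle the integrand is $O(\epsilon^{\alpha-\beta'})$ and the arc length is $O(\epsilon)$, so its contribution is $O(\epsilon^{2\alpha-\beta+1})$. This tends to zero, and $z^{2\alpha-\beta}$ is integrable at $0$, precisely when $2\alpha-\beta+1>0$. That holds because $\beta<\alpha+1$ and $\alpha>0$. Hence the formula for $f_{\alpha,\beta-\alpha}$ remains valid under the hypotheses of \cref{thm:char_timescale_f}. The pole part $g$ is not involved here.

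Next, we bound the imaginary part crudely: $\bigl|\text{Im}\, \frac{e^{i\pi(2\alpha-\beta)}}{w+1}\bigr| \le \frac{1}{|w+1|}$, where $w = z^\alpha e^{i\pi\alpha}$ runs along the ray of angle $\pi\alpha$. The key geometric step is a uniform lower bound $|w+1|\ge c(\alpha)$. If $\cos(\pi\alpha)\ge 0$, i.e.\ $\alpha\in(0,1/2]\cup[3/2,2)$, then $\text{Re}\, w\ge 0$, so $|w+1|\ge 1$ and we take $c=1$. If $\alpha\in(1/2,3/2)$, the ray points into the left half-plane. The distance from $-1$ to the ray through the origin at angle $\pi\alpha$ is then exactly $|\sin(\pi\alpha)|$, attained at the foot of the perpendicular, so $c=|\sin(\pi\alpha)|$. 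These two values of $c$ account for the two cases in the definition of $\tau(\alpha,\beta)$.

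With this bound, the estimate reduces to a Gamma integral:
\begin{equation*}
|f_{\alpha,\beta-\alpha}(t)| \le \frac{1}{\pi c}\int_0^\infty e^{-zt} z^{2\alpha-\beta}\,dz = \frac{\Gamma(2\alpha-\beta+1)}{\pi c}\, t^{-(2\alpha-\beta+1)} .
\end{equation*}
Dividing by $\bigl|t^{-\alpha+\beta-1}/\Gamma(-\alpha+\beta)\bigr|$ gives a ratio of at most $\bigl|\Gamma(-\alpha+\beta)\Gamma(2\alpha-\beta+1)/(\pi c)\bigr|\, t^{-\alpha} = (\tau/t)^\alpha$. This is finite because $\beta-\alpha\notin\Z_{\le0}$. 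The ratio is $<\varepsilon$ exactly when $t>\varepsilon^{-1/\alpha}\tau$. For the strict inequality, note that the bound $|\text{Im}\,\zeta|\le|\zeta|$, together with $|w+1|\ge c$, cannot be an equality for almost every $z$. Alternatively, $t>\varepsilon^{-1/\alpha}\tau$ already makes $(\tau/t)^\alpha<\varepsilon$ strictly.

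The main obstacle is the first step: legitimately extending the contour representation to the index $\beta-\alpha$, which lies outside the stated range of \cref{thm:mittag_leff_representation}. I would handle it by the small-circle estimate above. The geometric lower bound on $|w+1|$ is the only other genuinely non-routine step.
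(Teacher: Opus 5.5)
Your proof is correct and follows the paper's argument: bound the imaginary part by the modulus, use $|z^\alpha e^{i\pi\alpha}+1|\ge 1$ or $|z^\alpha e^{i\pi\alpha}+1|\ge|\sin(\pi\alpha)|$ according to the sign of $\cos(\pi\alpha)$, evaluate the resulting Gamma integral, and solve $(\tau/t)^\alpha<\varepsilon$ for $t$. The step you call the main obstacle is not needed, because the claim is an inequality about the integral \cref{eq:def_f} defining $f_{\alpha,\beta-\alpha}$, which converges once $2\alpha-\beta+1>0$; moreover, if you did want the identification with $e_{\alpha,\beta-\alpha}$, your small-circle argument would not suffice when $\beta-\alpha<0$ (e.g.\ $\alpha\in(1,2)$ with $\beta=1$, or the Riemann--Liouville cases), since there $e_{\alpha,\beta-\alpha}$ is not locally integrable at $t=0$ and the Bromwich and large-arc estimates break down, so analytic continuation in the second index is the right tool.
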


\begin{proof}
    We will first prove a uniform bound on $f_{\alpha,\hat\beta}(t)$ (taking $\hat \beta= \beta-\alpha$), and then find the time $t_0$ such that $\frac{t^{-\alpha+\beta-1}}{\Gamma(-\alpha+\beta)}$ dominates over that bound for $t\geq t_0$.
    We have
    \begin{align}
      \left |f_{\alpha,\hat\beta}(t) \right | =&  \left | \frac{1}{\pi}\int_{0}^\infty e^{-zt} z^{\alpha- \hat \beta}\text{Im}  \frac{e^{i \pi (\alpha- \hat \beta)}}{z^{\alpha  }e^{i \pi \alpha}+1} dz\right | \notag
      \\
       \leq&  \frac{1}{\pi}\int_{0}^\infty e^{-zt} z^{\alpha-\hat \beta}  \left |\frac{1}{z^{\alpha  }e^{i \pi \alpha}+1}\right |  dz. \label{eq:f_bound_step_2}
    \end{align}
    Now, for $\alpha \in (0,1/2) \cup (3/2,2)$, we have that $\text{Re}[e^{i\pi\alpha}]>0$, and so $| z^\alpha e^{i\pi\alpha}+1| \geq 1 $. For $\alpha\in (1/2,3/2)$, $\text{Re}[e^{i\pi\alpha}]\leq0 $; in this case, $|z^{\alpha}e^{i \pi\alpha}+1| \geq |\sin(\pi\alpha)|$ due to
    \begin{equation}
        |y e^{i\pi\alpha}+1 | = |y + e^{-i\pi\alpha}| \geq |-i\sin(\pi\alpha)| = |\sin( \pi\alpha)|. 
    \end{equation}
   Taking $b_\alpha$ to be either $1$ or $|\sin(\pi\alpha)|$, and developing \cref{eq:f_bound_step_2} we get that
    \begin{equation} \label{eq:bound_on_f}
         \left |f_{\alpha,\hat\beta}(t) \right | \leq \frac{1}{\pi\, b_\alpha} \int_{0}^\infty e^{-zt} z^{\alpha- \hat\beta}  dz = \frac{t^{-\alpha+\hat\beta-1} \Gamma(\alpha-\hat\beta+1)}{\pi \, b_\alpha}.
    \end{equation}
    From this bound we have \begin{equation}
        \left | \frac{f_{\alpha, \beta-\alpha}(t)}{ \left( \frac{t^{-\alpha+\beta-1}}{\Gamma(-\alpha+\beta)} \right)}   \right | \leq   \left | \frac{t^{-\alpha} \Gamma(-\alpha+\beta) \Gamma(2\alpha-\beta+1 )}{\pi \,b_\alpha}  \right |
    \end{equation}
    for all $t>0$.
    Setting the right-hand side equal to $\varepsilon$ and solving for $t$, gives $t_0= \varepsilon^{-1/\alpha} \tau(\alpha,\beta)$, as claimed.
\end{proof}

    \begin{rmk}
    For $\alpha\in(0,1)$, \cref{eq:bound_on_f} is a bound on the Mittag-Leffler function $e_{\alpha,\beta}(t)$, because there is no pole contribution to the integral over $C_R$.
    \end{rmk}

\subsection{Characteristic time for domination of $g_{\alpha,\beta-\alpha}(t)$}
         We will now derive the characteristic time $T(\alpha,\beta),$ after which the term $\frac{t^{-\alpha+\beta-1}}{\Gamma(-\alpha+\beta)}$ dominates over $g_{\alpha,\beta-\alpha}(t)$.

\begin{theorem} \label{thm:char_timescale_g}
    For $1 <\alpha < 2$, $\beta< \alpha+1$, and $\beta - \alpha \not\in \Z_{\leq 0}$,  we have that
    \begin{equation} \label{eq:bound_on_g_aba}
      \left|  g_{\alpha,\beta-\alpha}(t)  \right | \leq   \varepsilon\left|\frac{t^{-\alpha+\beta-1}}{\Gamma(-\alpha+\beta)} \right| \qquad 
      t\geq T_\varepsilon(\alpha,\beta).
    \end{equation}
    where 
    \begin{equation}
         T_\varepsilon(\alpha,\beta) = \frac{\alpha+1-\beta}{\cos(\pi/\alpha)}  W_{-1}\left(  \frac {\cos(\pi/\alpha)}{\alpha+1-\beta}\left(\frac{\alpha}{2}\frac{\varepsilon}{|\Gamma(-\alpha+\beta) |} \right)^{\frac{1}{\alpha+1-\beta}} \right),
    \end{equation}
    ($W_{-1}$ is the $-1$ branch of the Lambert W function).
\end{theorem}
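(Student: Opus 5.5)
The plan is to bound the oscillatory pole contribution \cref{eq:def_g} by its exponential envelope, and then find when that envelope falls below $\varepsilon$ times the power-law term. From \cref{eq:def_g} with $\beta$ replaced by $\beta-\alpha$, the cosine factor is at most one in absolute value, so
\begin{equation*}
  |g_{\alpha,\beta-\alpha}(t)| \leq \frac{2}{\alpha} e^{t\cos(\pi/\alpha)} \qquad (t>0).
\end{equation*}
Here $c:=\cos(\pi/\alpha)<0$ for $\alpha\in(1,2)$. It therefore suffices to find $T_\varepsilon$ such that
\begin{equation*}
  \frac{2}{\alpha} e^{ct} \leq \varepsilon \frac{t^{-\alpha+\beta-1}}{|\Gamma(-\alpha+\beta)|} \qquad \text{for all } t\ge T_\varepsilon .
\end{equation*}
Set $p:=\alpha+1-\beta>0$, which holds because $\beta<\alpha+1$. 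The condition $\beta-\alpha\notin\Z_{\le0}$ guarantees that $\Gamma(-\alpha+\beta)$ is finite and nonzero. The inequality is then equivalent to $t^{p}e^{ct}\le K$, where $K:=\frac{\alpha\varepsilon}{2|\Gamma(-\alpha+\beta)|}$.

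Next we take $p$-th roots, giving $t\,e^{ct/p}\le K^{1/p}$, and multiply by $c/p<0$. This reverses the inequality:
\begin{equation*}
  \frac{ct}{p}\, e^{ct/p} \;\ge\; \frac{c}{p}K^{1/p}.
\end{equation*}
Write $w=ct/p\in(-\infty,0)$. The function $w\mapsto we^{w}$ decreases on $(-\infty,-1]$ from $0^-$ to $-1/e$, and it is inverted there by the branch $W_{-1}$. Hence, for $w\le -1$, the inequality $we^w\ge y$ with $y=\frac{c}{p}K^{1/p}\in[-1/e,0)$ is equivalent to $w\le W_{-1}(y)$. This in turn is equivalent to $t\ge \frac{p}{c}W_{-1}(y)$, which is exactly $T_\varepsilon(\alpha,\beta)$ as stated. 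For all $t\ge T_\varepsilon$ we have $w\le W_{-1}(y)\le -1$, so we stay on the monotone branch and the inequality persists for all larger $t$. This monotonicity is what turns a single crossing point into a bound valid for every $t\ge T_\varepsilon$.

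The main obstacle is the domain of $W_{-1}$. We need $y\ge -1/e$, that is, $\frac{|c|}{p}K^{1/p}\le 1/e$. If this fails, then $t^pe^{ct}$ never exceeds $K$: its maximum over $t>0$ is $(p/(e|c|))^p$, attained at $t=p/|c|$. In that case the bound holds for all $t>0$, and $T_\varepsilon$ is not defined as a real number. I would handle this with a remark, either interpreting $T_\varepsilon$ as $0$ or noting that the statement is meant for $\varepsilon$ small enough, which is the regime of interest. Apart from this, the argument is the routine manipulation above, together with a check that $\beta<\alpha+1$ indeed makes $p>0$, so that taking $p$-th roots preserves the direction of the inequality.
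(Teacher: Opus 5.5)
Your proposal is correct and follows the paper's own route. You bound the cosine by one to get the envelope $\frac{2}{\alpha}e^{t\cos(\pi/\alpha)}$, reduce to $t^{\alpha+1-\beta}e^{t\cos(\pi/\alpha)}\le \frac{\alpha\varepsilon}{2|\Gamma(-\alpha+\beta)|}$, and solve via the $W_{-1}$ branch. Two points you add make explicit what the paper's ``latest time'' reasoning leaves implicit: the monotonicity of $we^{w}$ on $(-\infty,-1]$, which gives the bound for every $t\ge T_\varepsilon$ rather than only at the crossing, and your remark on the case where the argument of $W_{-1}$ falls below $-1/e$, where the bound holds for all $t>0$.
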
 


\begin{proof}
    Recall that \[
    g_{\alpha,\beta}(t) = \frac{2}{\alpha}  e^{t \cos(\pi/\alpha)} \cos \big( t\sin(\pi/\alpha) - ( \beta-1) \pi/\alpha \big).
    \]
    To bound $g$, we can ignore the oscillations. So
    \begin{equation}
        \left|\frac{g_{\alpha,\beta-\alpha }(t ) }{ \left (   \frac{t^{-\alpha+\beta-1}}{ \Gamma(-\alpha+\beta)}  \right)}  \right| \leq \frac{2}{\alpha}e^{t\cos(\pi/\alpha)}  t^{\alpha+1-\beta}  | \Gamma(-\alpha+\beta) |. \label{eq:exp_vs_power_ineq}
    \end{equation}
    Note that with this simplification, the value of $\beta$ in $g_{\alpha,\beta}$ no longer matters, as it only encodes phase information for the oscillations.
  As $\cos(\pi/\alpha) <0$, the exponential factor $\exp( t \cos(\pi/\alpha))$ will eventually decay faster than $t^{\alpha+1-\beta}$ grows, pushing the right-hand side of \cref{eq:exp_vs_power_ineq} to arbitrarily small values. Labeling the time the right-hand side of \cref{eq:exp_vs_power_ineq} reaches $\varepsilon$ as $ t =  T_\varepsilon$, we can solve
   \begin{align}
      \frac{2}{\alpha} e^{ \ T_\varepsilon \cos(\pi/\alpha)} T_\varepsilon^{\alpha+1-\beta}  | \Gamma(-\alpha+\beta) | =& \varepsilon \label{eq:when_it_crosses_def_T}
       \\
       e^{T_\varepsilon \cos(\pi/\alpha)} T_\varepsilon^{\alpha+1-\beta} =&  \frac{\alpha}{2}\frac{\varepsilon}{|\Gamma(-\alpha+\beta) |} \notag
       \\
       e^{T_\varepsilon \frac {\cos(\pi/\alpha)}{\alpha+1-\beta}} \frac {\cos(\pi/\alpha)}{\alpha+1-\beta}T_\varepsilon =&  \frac {\cos(\pi/\alpha)}{\alpha+1-\beta}\left(\frac{\alpha}{2}\frac{\varepsilon}{|\Gamma(-\alpha+\beta) |} \right)^{\frac{1}{\alpha+1-\beta}}. \notag
   \end{align}
This is a transcendental equation of the form \(y e^y = z,\)
which, for $z\in [-1/e,0)$ has real solutions at 
 $y = W_{-1}(z),W_{0}(z)$, where $W_{}$ is the Lambert W function. Because we want the latest time that \cref{eq:when_it_crosses_def_T} is satisfied, and $ W_{-1}(z) \leq W_{0}(z) < 0$, we take the $-1$ branch of the Lambert W function. Thus
\begin{align*}
    \frac {\cos(\pi/\alpha)}{\alpha+1-\beta}T_\varepsilon  =& W_{-1} \left(  \frac {\cos(\pi/\alpha)}{\alpha+1-\beta}\left(\frac{\alpha}{2}\frac{\varepsilon}{|\Gamma(-\alpha+\beta) |} \right)^{\frac{1}{\alpha+1-\beta}} \right)
    \\
    T_\varepsilon =& \frac{\alpha+1-\beta}{\cos(\pi/\alpha)}  W_{-1}\left(  \frac {\cos(\pi/\alpha)}{\alpha+1-\beta}\left(\frac{\alpha}{2}\frac{\varepsilon}{|\Gamma(-\alpha+\beta) |} \right)^{\frac{1}{\alpha+1-\beta}} \right),
\end{align*}
    which is $T_{\varepsilon}(\alpha,\beta)$.
\end{proof} 
\begin{rmk}
     Because we only removed phase information, $T_\varepsilon(\alpha,\beta)$ is a much more precise timescale than $\varepsilon^{-1/\alpha}\tau(\alpha,\beta)$, where bounding the integral in $f_{\alpha,\beta}(t)$ with just a  power law loses much more information. To notionally match $\tau(\alpha,\beta)$, we will take $T(\alpha,\beta) =T_\varepsilon(\alpha,\beta)|_{\varepsilon=1}$ when we consider them as general 'characteristic' timescales.
\end{rmk}

 \subsection{Applying timescales to Mittag-Leffler functions}
 In the preceding subsections, we found characteristic times $\tau(\alpha,\beta)$ and $T(\alpha,\beta)$, 
 \begin{align}
     \tau(\alpha,\beta) =&
     \begin{cases}  \left | \frac{ \Gamma(-\alpha + \beta)  \Gamma(2\alpha-\beta + 1) }{\pi} \right  | ^{1/\alpha} &\alpha \in (0,1/2)\cup(3/2,2)
     \\
      \left | \frac{ \Gamma(-\alpha + \beta) \Gamma(2\alpha-\beta + 1) }{\pi \sin(\pi\alpha)} \right  | ^{1/\alpha} &\alpha \in (1/2,1)\cup(1,3/2)
     \end{cases}
     \\T(\alpha,\beta)  =& \frac{\alpha+1-\beta}{\cos(\pi/\alpha)}  W_{-1}\left(  \frac {\cos(\pi/\alpha)}{\alpha+1-\beta}\left(\frac{\alpha}{2}\frac{1}{|\Gamma(-\alpha+\beta) |} \right)^{\frac{1}{\alpha+1-\beta}} \right) \qquad \alpha \in (1,2).
 \end{align}

\begin{figure}
    \centering
    \includegraphics[width=0.9\linewidth]{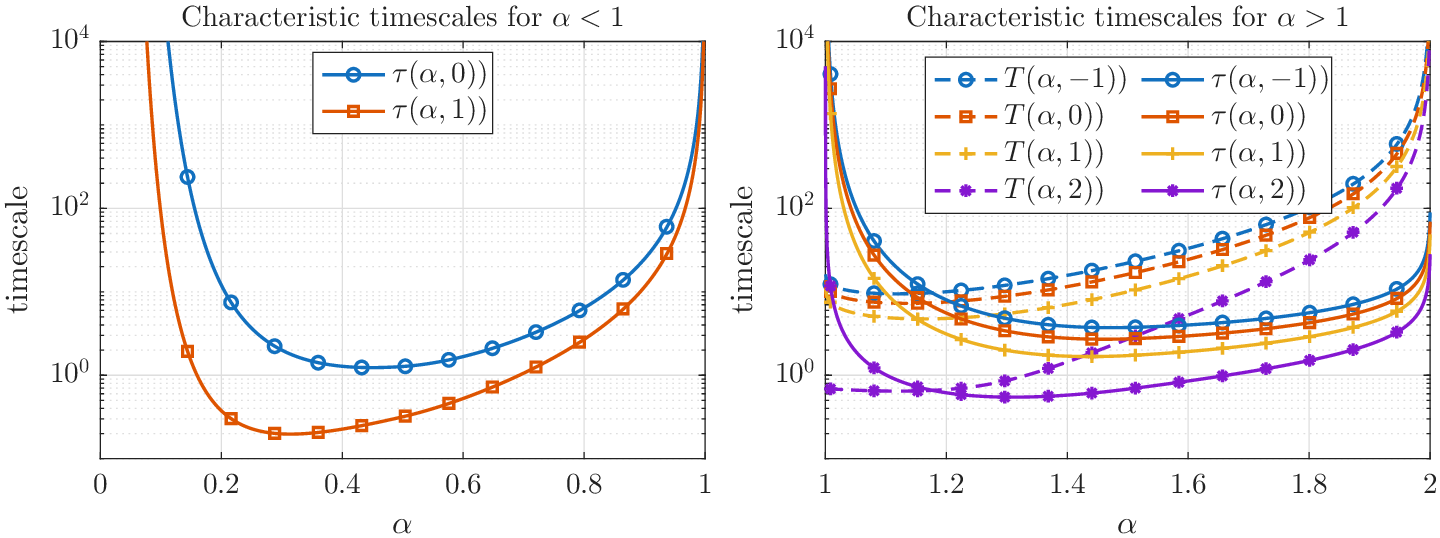}
    
    \caption{
    The characteristic timescales of the Mittag-Leffler function are plotted for $\alpha \in (0,1)$ on the left and $\alpha\in(1,2)$ on the right. 
    $\tau(\alpha,\beta)$ is the characteristic timescale for $\frac{t^{-\alpha+\beta-1}}{\Gamma(-\alpha+\beta)}$ dominating over $f_{\alpha,\beta-\alpha}(t)$ (see \cref{thm:char_timescale_f}), and $T(\alpha,\beta)$ is the timescale for $\frac{t^{-\alpha+\beta-1}}{\Gamma(-\alpha+\beta)}$ dominating over over the oscillatory contribution to the Mittag-Leffler function, $g_{\alpha,\beta}(t)$ (\cref{thm:char_timescale_g}). While there are only two for $\alpha<1$ (corresponding to Caputo and Riemann-Liouville cases), there are eight on the right, due to two types of initial data for both Caputo and Riemann-Liouville, and the two competing timescales in each of those four cases.
    }
    
    \label{fig:tau_T_comparisons}
\end{figure}
Recall from \cref{thm:spectral_rep_u_xi} that we have four Mittag-Leffler functions under consideration, 
$e_{\alpha,1}(t),e_{\alpha,2}(t)$ for the Caputo case, and $e_{\alpha,\alpha}(t),e_{\alpha,\alpha-1}(t)$ for the Riemann-Liouville case. 
Applying these timescales to the Caputo case is fairly straightforward:
\begin{equation}\label{eq:e_a_ell_decomposition}
    e_{\alpha,\ell}(t) = \begin{cases}
        \frac{t^{-\alpha+\ell-1}}{\Gamma(-\alpha+\ell)} - f_{\alpha, \ell-\alpha}(t) & \alpha<1
        \\
         \frac{t^{-\alpha+\ell-1}}{\Gamma(-\alpha+\ell)} - f_{\alpha,\ell-\alpha}(t) - g_{\alpha,\ell-\alpha}(t) & \alpha>1
    \end{cases}
\end{equation}
for $\ell = 1,2$. The term $\frac{t^{-\alpha+\ell-1}}{\Gamma(-\alpha+\ell)}$ starts to dominate over $f_{\alpha,\ell-\alpha}(t)$ around time $\tau(\alpha,\ell)$ (\cref{thm:char_timescale_f}), and $g_{\alpha,\ell-\alpha}(t)$ around time $T(\alpha,\ell)$ (\cref{thm:char_timescale_g}).

The parameter $\beta$ for the Riemann-Liouville modes takes a value such that $\beta-\alpha \in \Z_{\leq0}$, which violates the conditions of \cref{thm:char_timescale_f,thm:char_timescale_g} because $\frac{t^{\alpha- (\alpha - j)-1}}{\Gamma(\alpha-(\alpha -j))} \equiv 0$ and will not dominate anything. To avoid this issue, we apply the recurrence relation once more:
\begin{equation}
    e_{\alpha,\alpha-j}(t) =0-e_{\alpha,-j}(t) =  - \frac{t^{-\alpha-j-1}}{\Gamma(-\alpha-j)} + e_{\alpha,-j-\alpha}(t),
\end{equation}
and so we represent $e_{\alpha,\alpha-j}(t)$ 
\begin{equation}\label{eq:e_a_j_decomposition}
    e_{\alpha,\alpha-j}(t) = - e_{\alpha,-j}(t) = 
    \begin{cases}
        - \frac{t^{-\alpha-j-1}}{\Gamma(-\alpha-j)} + f_{\alpha,-\alpha-j}(t) & \alpha<1
        \\        
        - \frac{t^{-\alpha-j-1}}{\Gamma(-\alpha-j)} + f_{\alpha,-\alpha-j}(t) + g_{\alpha,-\alpha-j}(t) & \alpha>1.
    \end{cases}
\end{equation}
Thus, $\tau(\alpha,-j)$ and $T(\alpha,-j)$ are the characteristic times of $e_{\alpha,\alpha-j}(t).$

All relevant timescales, $\tau(\alpha,\beta),T(\alpha,\beta)$ for $\beta \in\{ 2,1,0,-1\}$ are plotted in their respective domains in \cref{fig:tau_T_comparisons}. Therein note that for $\alpha \gtrsim1.2,$ we have $T(\alpha,\beta) > \tau(\alpha,\beta)$ for all four cases of $\beta$. In this case, when  $\frac{t^{-\alpha+\beta-1}}{\Gamma(-\alpha+\beta)}$ equals the oscillatory pole contribution $g_{\alpha,\beta-\alpha}(t)$  in magnitude (at time $t=T(\alpha,\beta)$) the other contribution $f_{\alpha,\beta-\alpha}(t)$ is largely irrelevant in comparison. 

Surprisingly, this allows us to accurately approximate the number of real zeros of the Mittag-Leffler function $e_{\alpha,\beta}(t)$, this is discussed in \cref{section:zereos_of_ml}.

\subsection{Timescales for different modes and lengthscales}
\label{subsection:timescale_and_lambda}

The Fourier modes for the anomalous diffusion equations, given in \cref{thm:spectral_rep_u_xi}, are $e_{\alpha,\gamma+1}(t;k^2).$ Recall the relation between $e_{\alpha,\beta}(t;\lambda)$ from \cref{eq:relate_e_e_lambda}, 
\begin{equation}
    e_{\alpha,\beta}(t;\lambda) = \lambda^{\frac{1-\beta}{\alpha}}e_{\alpha,\beta}( \lambda ^{1/\alpha} t).
\end{equation}
Because we are comparing between competing terms of the Mittag-Leffler function, the  overall prefactor $\lambda^{\frac{1-\beta}{\alpha}}$ is irrelevant, and we can conclude that the important timescales scale like $ \lambda^{1/\alpha}t = t_{*}$, where $t_*$ is the relevant timescale for $e_{\alpha,\beta}(t)$. So for the Caputo case,  (similar to \cref{eq:e_a_ell_decomposition})
\begin{equation}
    e_{\alpha,\ell}(t;\lambda) = \lambda^{\frac{1-\ell}{\alpha}} e_{\alpha,\ell}(\lambda^{1/\alpha}t) = \lambda^{\frac{1-\ell}{\alpha}}
    \begin{cases}
        \frac{(\lambda^{1/\alpha}t)^{-\alpha+\ell-1}}{ \Gamma(-\alpha+\ell)} - f_{\alpha,\ell-\alpha}(\lambda^{1/\alpha}t) &\alpha<1
        \\
         \frac{(\lambda^{1/\alpha}t)^{-\alpha+\ell-1}}{ \Gamma(-\alpha+\ell)} - f_{\alpha,\ell-\alpha}(\lambda^{1/\alpha}t) - g_{\alpha,\ell-\alpha}(\lambda^{1/\alpha}t) &\alpha>1
    \end{cases}
\end{equation}
 the asymptotic term starts to dominate over $f_{\alpha,\ell-\alpha}$ around time $\lambda^{-1/\alpha}\tau(\alpha,\ell)$, and over $g_{\alpha,\ell-\alpha}$ around time $\lambda^{-1/\alpha}T(\alpha,\ell)$.
Likewise, for the Riemann-Liouville, (similar to \cref{eq:e_a_j_decomposition})
\begin{equation}
    e_{\alpha,\alpha-j}(t;\lambda)
    = \lambda^{\frac{1+j-\alpha}{\alpha}} e_{\alpha,\alpha-j}(\lambda^{1/\alpha}t) 
    = \lambda^{\frac{1+j-\alpha}{\alpha}}
    \begin{cases}
        - \frac{(\lambda^{1/\alpha}t)^{-\alpha-j-1}}{\Gamma(-\alpha-j)} + f_{\alpha,-\alpha-j}(t\lambda^{1/\alpha}) & \alpha<1
        \\        
        - \frac{(\lambda^{1/\alpha}t)^{-\alpha-j-1}}{\Gamma(-\alpha-j)} + f_{\alpha,-\alpha-j}(\lambda^{1/\alpha}t) + g_{\alpha,-\alpha-j}(\lambda^{1/\alpha}t) & \alpha>1
    \end{cases}
\end{equation}
with the asymptotic term dominating over $f_{\alpha,-\alpha-j}$ around $t=\lambda^{-1/\alpha} \tau(\alpha,-j)$, and dominating over $g_{\alpha,-\alpha-j}$ around $t=\lambda^{-1/\alpha} T(\alpha,-j)$.

For anomalous diffusion we consider $\lambda= k^2$, so that timescales for the $k$th mode of $u_{\gamma}$ scale like $t_{*,k} = (k^2)^{-\alpha} t_{*}$. As frequency $k$ increases, the timescale decreases, meaning that short-wavelength features reach their asymptotic form  much more quickly than long-wavelength features. 
Generally,  $k^2 \propto (\text{length})^{-2}$, so that features of length $L$ correspond to timescales of $L^{2/\alpha} t_*$. This relation between time and length is clearest when reconstructing the initial data using the Laplacian or bi-Laplacian through \cref{eq:init_data_recov_0,eq:init_data_recov_1,eq:init_data_recov_a1,eq:init_data_recov_a2}. Therein, features in the initial data of length $L$ resolve in the reconstruction at around time 
\begin{equation}
    t_{\text{res}} \approx \begin{cases} L^{2/\alpha} 
         \tau(\alpha,\beta) &\alpha <1.2
         \\
          L^{2/\alpha}  T(\alpha,\beta)  & \alpha>1.2
    \end{cases}
    \label{eq:relate_length_characteristic_times}
\end{equation}
(the transition at $\alpha=1.2$ is somewhat loose, coming from  $T(\alpha,\beta) > \tau(\alpha,\beta)$ for $\alpha>1.2$).
To demonstrate this, \cref{fig:length_features_subdiff,fig:length_features_superdiff} show the solution (and initial data reconstruction) of the $\gamma=0$ Caputo problem for $\alpha=0.75$ and $\alpha=1.3$ at the characteristic times corresponding to the widths of gaussians in the initial data.






\begin{figure}
    \centering
    \includegraphics[width=0.95\linewidth]{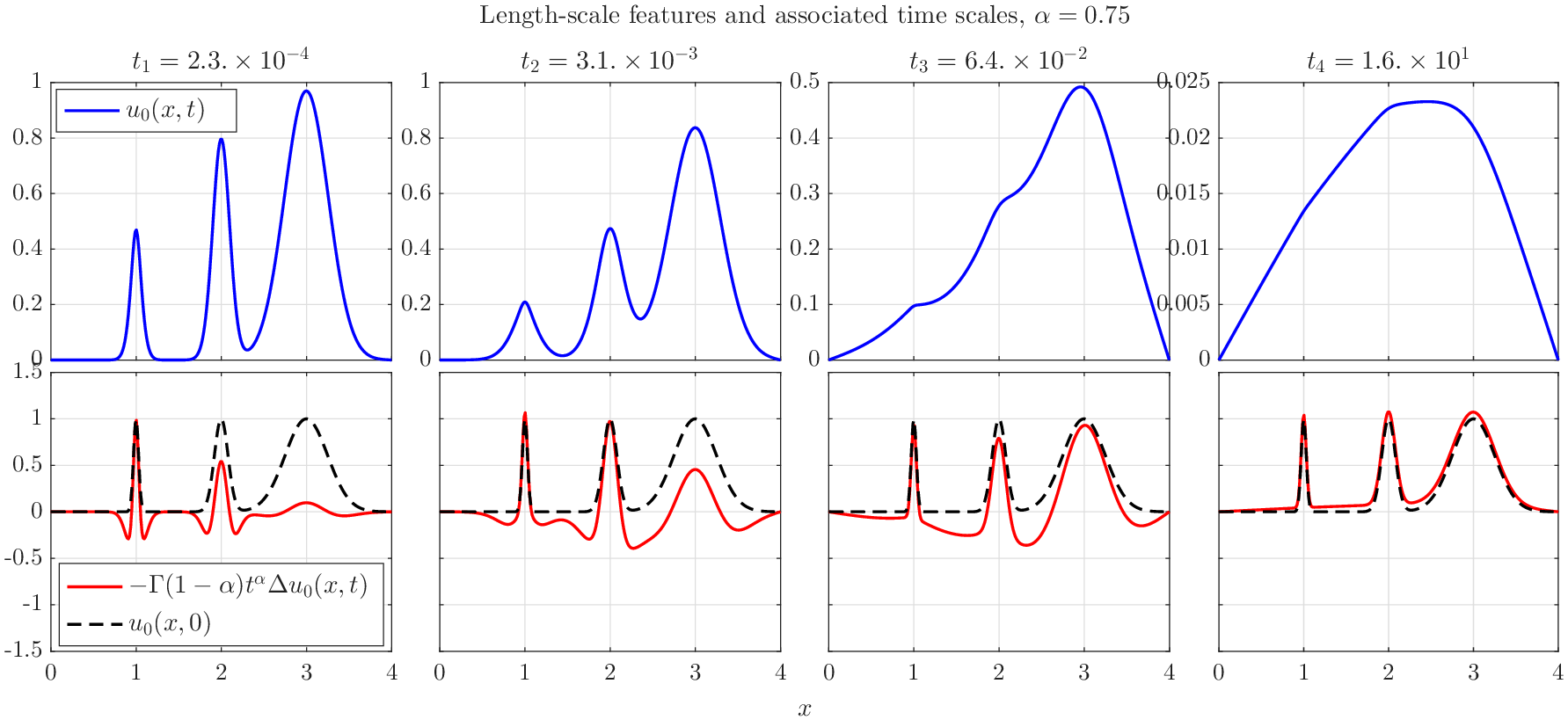}
    \caption{
    The plots display the solution and initial data reconstruction for Caputo ($\gamma=0$ in \cref{def:anom_diff_u_xi}) subdiffusion, $\alpha=0.75$. The initial data is a superposition of three unnormalized gaussians with standard deviations of $\sigma_1 = 0.03$, $\sigma_2 = 0.08$, and $\sigma_3 = 0.25$. The upper row shows the function values $u_0(x,t_j )$ for times $t_j$, $j=1,\cdots,4$. The lower row shows the initial data $u_0(x,0)$ plotted together with $-\Gamma(1-\alpha) t_j^\alpha \Delta u_0(x,t_j)$, the reconstruction described in \cref{thm:backwards_problem}, at each time $t_j$. The times $t_j$ correspond to the length scales in the problem (see \cref{eq:relate_length_characteristic_times}), $t_j = (\sigma_j)^{2/\alpha} \tau(\alpha,1)$, for $j=1,2,3,$ and $t_4 = (4)^{2/\alpha} \tau(\alpha,1) $ ($4$ being the length of the domain).
    Notice how each gaussian in the initial data reconstruction is approximately resolved at its associated timescale. 
    }
    \label{fig:length_features_subdiff}
\end{figure}

\begin{figure}
    \centering
    \includegraphics[width=0.95\linewidth]{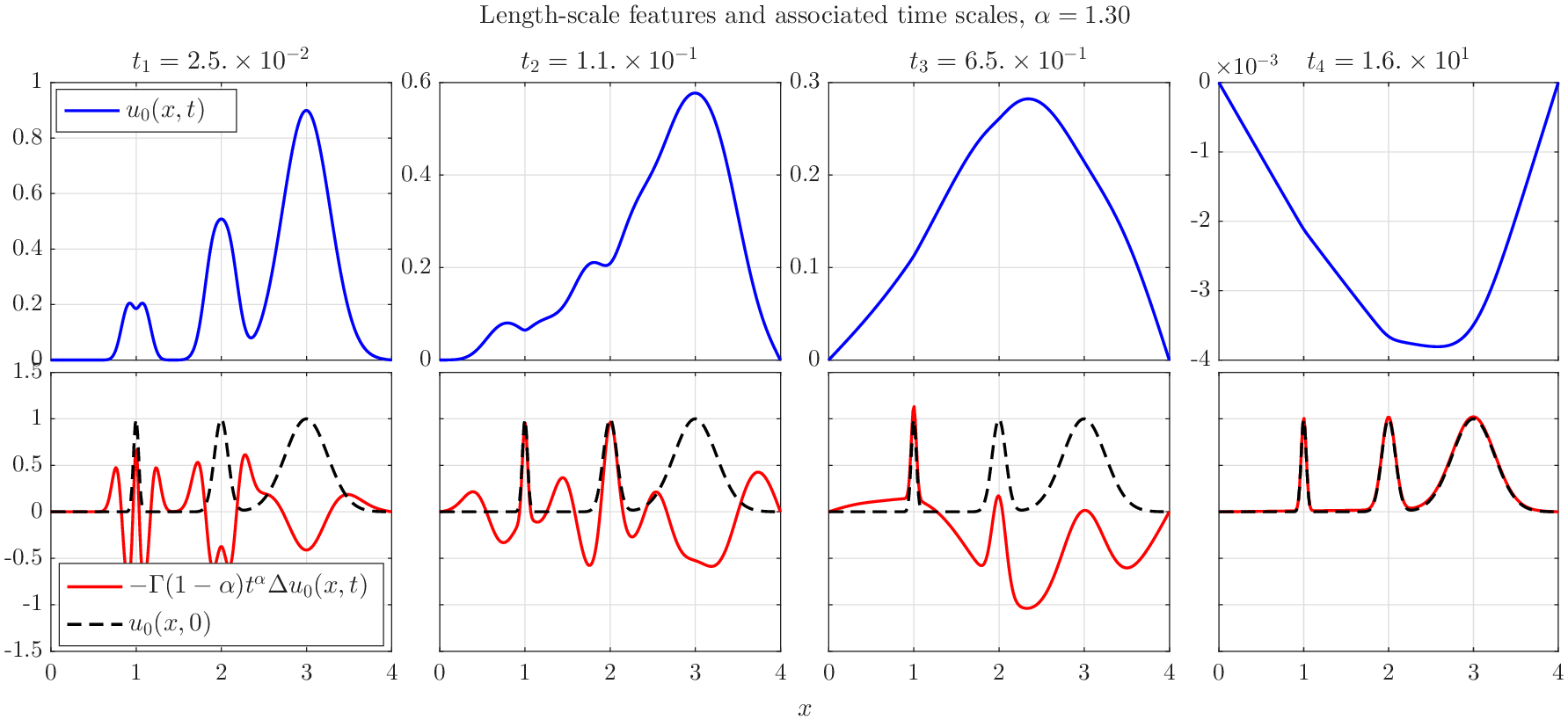}
    \caption{
    This displays the same setup as in \cref{fig:length_features_subdiff}, except with $\alpha=1.3$ -- corresponding to superdiffusion. Here,  $t_j = (\sigma_j)^{2/\alpha} T(\alpha,1)$, for $j=1,2,3,$ and $t_4 = (4)^{2/\alpha} T(\alpha,1) $.
    Note that superdiffusion solutions behave much more wave-like than subdiffusion or diffusion solutions, which is seen above.
    }
    \label{fig:length_features_superdiff}
\end{figure}

\section{Real zeros of the Mittag-Leffler function}

\label{section:zereos_of_ml}

We can construct an approximation for the number of zeros the Mittag-Leffler function $e_{\alpha,\beta}$ has for $t>0$ (as a function of $\alpha$ and $\beta$) using  the results of \cref{section:Characteristic_time_scales}. The problem of enumerating the real zeros of the Mittag-Leffler function is considered in \cite{hannekenEnumerationRealZeros2007,eloeExistenceComparisonMonotonicity2026,aleroevFractionalSturmLiouvilleProblem2025,abooaliComprehensiveStudyZeros2024}.

Though the zeros of Mittag-Leffler functions are not as regular as those of sinusoids, they are not sporadic.  Plotting (for a set value of $\beta$) the $e_{\alpha,\beta}(t) = 0$ contour in the $t \times \alpha$ plane, one can see a clear pattern (see \cref{fig:ML_contours}). The characteristic time $T_{\alpha,\beta}$ (as defined in \cref{section:Characteristic_time_scales}) separates the $t\times \alpha$ plane into a region with (mostly) periodic zeros, and a region with no zeros. This is to say that, for given values of $\alpha$ and $\beta$, $e_{\alpha,\beta}(t)$ has fairly regularly spaced zeros until around $T(\alpha,\beta)$. 
\begin{figure}
    \centering
    \includegraphics[width=0.9\linewidth]{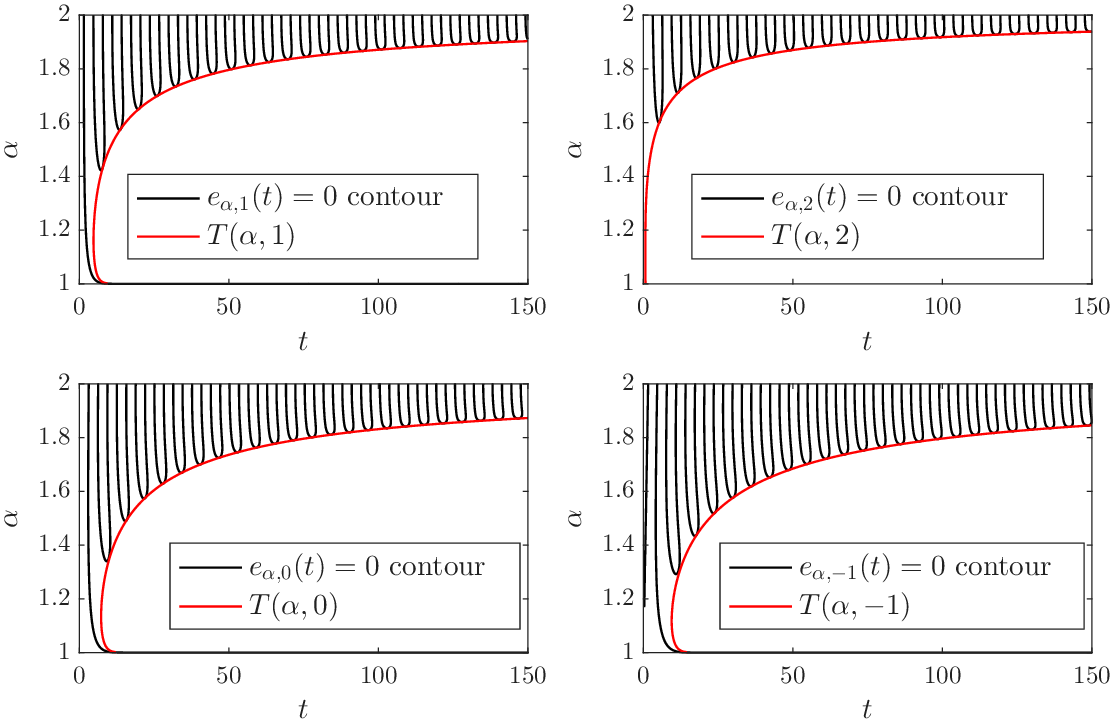}
    \caption{For various values of $\beta,$ the zero contour of the Mittag-Leffler function $e_{\alpha,\beta}(t)= 0$ plotted on the $t\times\alpha$ plane. The timescale $T(\alpha,\beta)$ is plotted as a function of $\alpha$. Note that $T(\alpha,\beta)$  breaks the $t\times \alpha $ plane into a region with zeros $t<T(\alpha,\beta)$ and a region with no zeros $t>T(\alpha,\beta)$.}
    \label{fig:ML_contours}
\end{figure}
To understand this behavior more fully, we can return to the analysis of \cref{section:Characteristic_time_scales}, that for $\alpha\in(1,2),$
\begin{equation}
    e_{\alpha,\beta}(t) = f_{\alpha,\beta}(t) + g_{\alpha,\beta}(t)
\end{equation}
with
\begin{align}
    &f_{\alpha,\beta}(t) \approx \frac{t^{-\alpha+\beta}}{\Gamma(-\alpha+\beta)} \qquad (\text{after $t \approx \tau(\alpha,\beta) $})
    \\
    &g_{\alpha,\beta}(t) = \frac{2}{\alpha} e^{ t \cos(\pi / \alpha)} \cos \left( \sin(\pi/\alpha) t - (1-\beta) \frac{\pi}{\alpha} \right).
\end{align}
$T(\alpha,\beta)$, defined as 
\begin{equation}
T(\alpha,\beta) =\frac{\alpha+1-\beta}{\cos(\pi/\alpha)}  W_{-1}\left(  \frac {\cos(\pi/\alpha)}{\alpha+1-\beta}\left(\frac{\alpha}{2}\frac{1}{|\Gamma(-\alpha+\beta) |} \right)^{\frac{1}{\alpha+1-\beta}} \right) 
\end{equation} 
 is precisely the time at which the exponential decay $\frac{2}{\alpha} e^{\cos(\pi/\alpha) t}$ falls below the long-tailed term $\frac{t^{-\alpha+\beta-1}}{\Gamma(-\alpha+\beta)}$ (\cref{thm:char_timescale_g}). So until (approximately) $t=T(\alpha,\beta)$ the oscillations (with $\omega = \sin(\pi/\alpha) $) in $g_{\alpha,\beta}(t)$ have high enough amplitude to cancel out $f_{\alpha,\beta}(t)$ giving $e_{\alpha,\beta}(t)$ zeros. 
Speaking loosely, the number of zeros that $e_{\alpha,\beta}(t)$ has should be twice the number of oscillations completed by $g_{\alpha,\beta}$, which means that
\begin{equation}
n_{\text{zeros}}   \approx 2 \left( \frac{  \sin(\pi/\alpha)\,T(\alpha,\beta)}{2\pi} \right).
\end{equation}

This estimate can be further refined by analyzing the phase of the oscillations and whether $e_{\alpha,\beta}(t)$ crosses $0$ an even or odd number of times. For example, in the  case of $\beta=1:$ $e_{\alpha,1}(0) = 1>0,$ and $e_{\alpha,1}(t\rightarrow\infty) = \frac{t^{-\alpha}}{\Gamma(-\alpha+1)} <0$ (due to the sign of $\Gamma(-\alpha+1) $ for $\alpha\in (1,2)$), so there must be an additional crossing of the axis in addition to the $2\times\text{(\# completed periods)}$.  
This gives the estimate of  $2\left \lfloor \frac{\sin(\pi/\alpha) T(\alpha,\beta)}{2\pi}\right \rfloor+1$ (where $\lfloor \cdot \rfloor$ denotes the floor function). This approximation is shown in \cref{fig:ML_zero_counting}, plotted together with the numerically counted number of zeros of $e_{\alpha,\beta}(t)$  (numerically calculating the Mittag-Leffler function using \cite{garrappaNumericalEvaluationTwo2015} in MATLAB).

\begin{figure}
    \centering
    \includegraphics[width=0.7\linewidth]{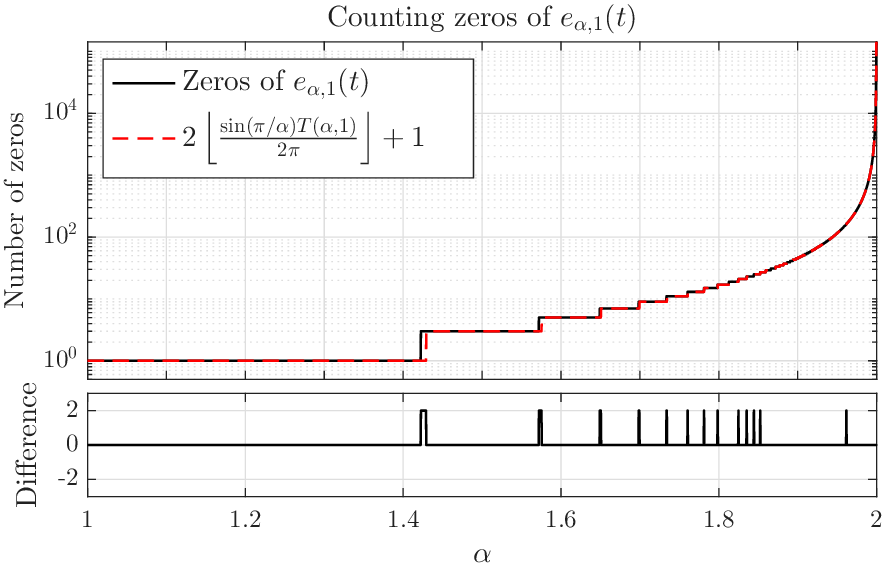}
    \caption{
    The top panel shows the number of zeros of the Mittag-Leffler function $e_{
    \alpha,\beta}(t)$ for $t\in\R$ as a function of $\alpha$ (computed for 2500 evenly spaced values of $\alpha\in [1.0001,1.9999]$ in MATLAB), compared with the estimate $2\left \lfloor \frac{\sin(\pi/\alpha) T(\alpha,\beta)}{2\pi}\right \rfloor+1$. The bottom panel shows the difference between the counted number of zeros  and the estimated number of zeros, demonstrating the accuracy even as the number of zeros diverges.}
    \label{fig:ML_zero_counting}
\end{figure}


\section{Transport on a comb} \label{section:comb_diffusion}
Heat transfer on a comb is an analytically solvable physical model capturing transport in fractal media. Subdiffusion of both Caputo and Riemann-Liouville types  (with $\alpha=1/2$) describes aspects of the transport on a comb \cite{iominFractionalDynamicsComblike2018,arkhincheevAnomalousDiffusionDrift1991,sandevHeterogeneousDiffusionComb2018}. The comb model has been used in many applications, including the motion along the spiny dendrites of nerve cells \cite{mendezComblikeModelsTransport2013} or fluid flow in a porous medium. 
The comb model emphasizes 'traps' which block transport, a feature known to cause anomalous diffusion in more general porous media \cite{arkhincheevAnomalousDiffusionCharge2000}.
The 'comb' consists of a 'spine' along the $x$-axis with 'ribs' extending along $y$ attached at every point on the spine (see \cref{fig:comb_figure}). Heat transport occurs only along the spine or a rib; there is no transfer between ribs except at the spine.

%
 

\begin{figure}
    \centering
    \includegraphics[width=0.35\linewidth]{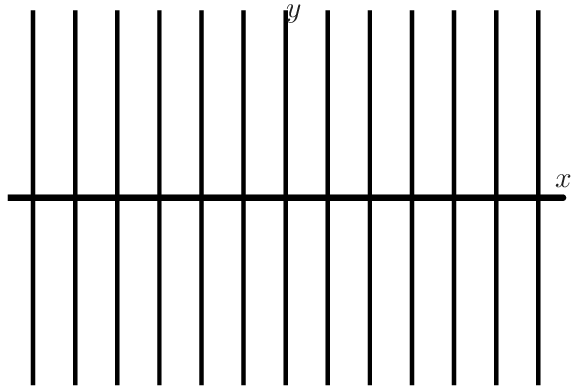}
    \caption{A rough depiction of the 'comb' medium. 
    A single 'spine' along the $x$-axis connects a continuum of otherwise disconnected 'ribs' along the $y$ direction. Note that the ribs are attached to every point of the spine and extend infinitely away from it, though here they are depicted as finite and discrete in spacing.
    }
    \label{fig:comb_figure}
\end{figure}

Let $h(x,y,t)$ be the temperature in the domain $[0,1]\times \R\times\R_{\geq0} $.
We assume that the initial heat distribution is restricted to the spine and that Dirichlet conditions hold at $ x=0$ and $ x=1$. 
Although the comb is topologically distinct from the plane, we model diffusion on the comb by anisotropic diffusion in the plane using the diffusion tensor $D$.
The problem governing heat transport is the following:
\begin{equation} 
   \left \{ \begin{aligned}
        &\partial_t h - \nabla \cdot D \nabla h = 0 
         \qquad \text{for } (x,y,t) \in  [0,1]\times \R\times\R_{\geq0}
        \\
        &h(x,y,0) = \delta(y) v(x)      ,
    \end{aligned} \right.
\end{equation}
Here, the anisotropic thermal conductivity (or anisotropic diffusion) tensor $D$ is: 
\begin{equation}
    D = \begin{pmatrix}
        c_x \delta(y) & 0
        \\
        0 & c_y
    \end{pmatrix},
\end{equation}
where $c_x$, $c_y$ are the thermal conductivities of the spine and ribs, respectively.
In the resulting diffusion equation, $\partial_x^2$ is multiplied by a delta function in $y$, ensuring that transfer along $x$ is only possible at $y=0$: 
\begin{equation} \label{eq:comb_diffusion_setup}
   \left \{ \begin{aligned}
        &\partial_t h -  \delta(y)c_x \partial_{x}^2 h - c_y \partial_y^2 h = 0 \qquad \text{for } (x,y,t) \in  [0,1]\times \R\times\R_{\geq0}
        \\
        &h(x,y,0) = \delta(y) v(x)      ,
    \end{aligned} \right.
\end{equation}
%
Taking the Laplace transform with respect to $t$ and the Fourier transform in $x$ (so $h(x,y,t) \rightarrow \tilde H(k,y,s)$), we reduce \cref{eq:comb_diffusion_setup} to
\begin{equation}
    s \tilde H(k,y,s) - \tilde v(k)\delta(y) + c_x k^2 \delta(y) \tilde H(k,y,s) - c_y \partial_y^2 \tilde H(k,y,s) = 0.
\end{equation}
This is a differential equation in $y$, with solution given as \cite{arkhincheevAnomalousDiffusionCharge2000} 
\begin{equation} \label{eq:comb_sol_LF}
    \tilde H(k,y,s) = \tilde v(k) \frac{ \exp \left(- \sqrt{s/c_y} |y| \right)}{2 \sqrt{sc_y} + c_x k^2}.
\end{equation}
%
    Considering heat transfer in the $x$ direction, we restrict $y$ to $y=0$, 
    \begin{equation}
         \tilde H(k,0,s) = \frac{\tilde v(k)}{2 \sqrt{c_y}}  \frac{1}{s^{1/2} +\frac{c_x}{2 \sqrt{c_y}}k^2 } 
         \, \, \xrightarrow{\El^{-1}}  \, \,
         \frac{\tilde v(k)}{2 \sqrt{c_y}} e_{1/2,1/2}(t;k^2) \label{eq:H_restricted},
    \end{equation}
    and obtain a solution to the Riemann-Liouville subdiffusion problem with $\alpha=1/2:$
    \begin{equation}
  \left \{  \begin{aligned}
    &\RL[]{1/2}{t} u_{\text{r}}- \frac{c_x}{2\sqrt{c_y}} \partial_x^2 u_{\text{r}} = 0
    \\
     &\RL[]{-1/2}{t}[u_{\text{r}}](x,t \rightarrow 0_+)  = \frac{1}{2 \sqrt {c_y}} v(x) ,
    \end{aligned}  \right.
\end{equation}
Here $u_{\text{r}}(x,t) = h(x,0,t)$ is the temperature along the spine.
By \cref{thm:asymptotic_convergence_bdd_domain}, the solution to this equation asymptotically behaves as
\begin{equation}
   u_{\text{r}}(x,t)\approx  - \frac{2 \sqrt{ c_y}}{c_x^2} \frac{t^{-3/2}}{\Gamma(-1/2)} (-\Delta)^{-2}_x v(x). \label{eq:comb_rl_asymptotic}
\end{equation}
Now, suppose that we are interested in identifying the initial temperature from temperature measurements along the spine at a later time.  
Applying \cref{thm:backwards_problem}, we can reconstruct the initial data using the temperature measured at time $t$ with 
\begin{equation}
    v(x) \approx -t^{3/2} \Gamma(-1/2) \frac{c_x^2}{2 \sqrt{c_y}}(-\Delta_x)^2 u_{\text{r}}(x,t) \label{eq:comb_recon_RL}.
\end{equation}

    Alternatively, we can integrate out the $y$ dependence, measuring  along $x$ the temperature $ u_{\text{m}}(x,t) = \int h(x,y,t)dy$ marginalized over $y$ and 
      using 
    \begin{equation}
        \int \tilde H(k,y,s) dy =\, \tilde v(k) \frac{s^{-1/2}}{s^{1/2} + \frac{c_x}{2 \sqrt{c_y}}k^2 } 
        \, \, \xrightarrow \, \, 
        {\El^{-1}} \tilde v(k) e_{1/2,1}(t;k^2). \label{eq:H_marginalized}
    \end{equation}
    The marginal temperature $u_{\text{m}}(x,t)$ solves the Caputo subdiffusion problem of order $\alpha=1/2:$
\begin{equation}
  \left \{  \begin{aligned}
   & \caputo[]{1/2}{t} u_{\text{m}} - \frac{c_x}{2\sqrt{c_y}} \partial_x^2 u_{\text{m}} = 0
    \\
    &u_{\text{m}}(x,0)  =   v(x) .
    \end{aligned}  \right. 
\end{equation}
The marginal temperature asymptotically is given as: 
\begin{equation}
    u_{\text{m}}(x,t)  \approx \frac{2 \sqrt{c_y}}{c_x} \frac{t^{-1/2}}{\Gamma(1/2)} (-\Delta)^{-1}_x v(x). \label{eq:comb_caputo_asymptotic}
\end{equation}
The initial conditions can be reconstructed from the measured marginal temperature using
\begin{equation}
    v(x) \approx  t^{1/2} \Gamma(1/2) \frac{c_x}{2 \sqrt{c_y}}(-\Delta_x) u_{\text{m}}(x,t)\label{eq:comb_recon_caputo}.
\end{equation}
These two distinct representations of diffusion on a comb, $u_{\text{r}}$ and $u_{\text{m}}$, are shown in \cref{fig:comb_plots} for three different moments in time, along with the initial data reconstructed from the values of $u_{\text{r}}$ and $u_{\text{m}}$ measured at those times.

\begin{figure}
    \centering
    \includegraphics[width=0.98\linewidth]{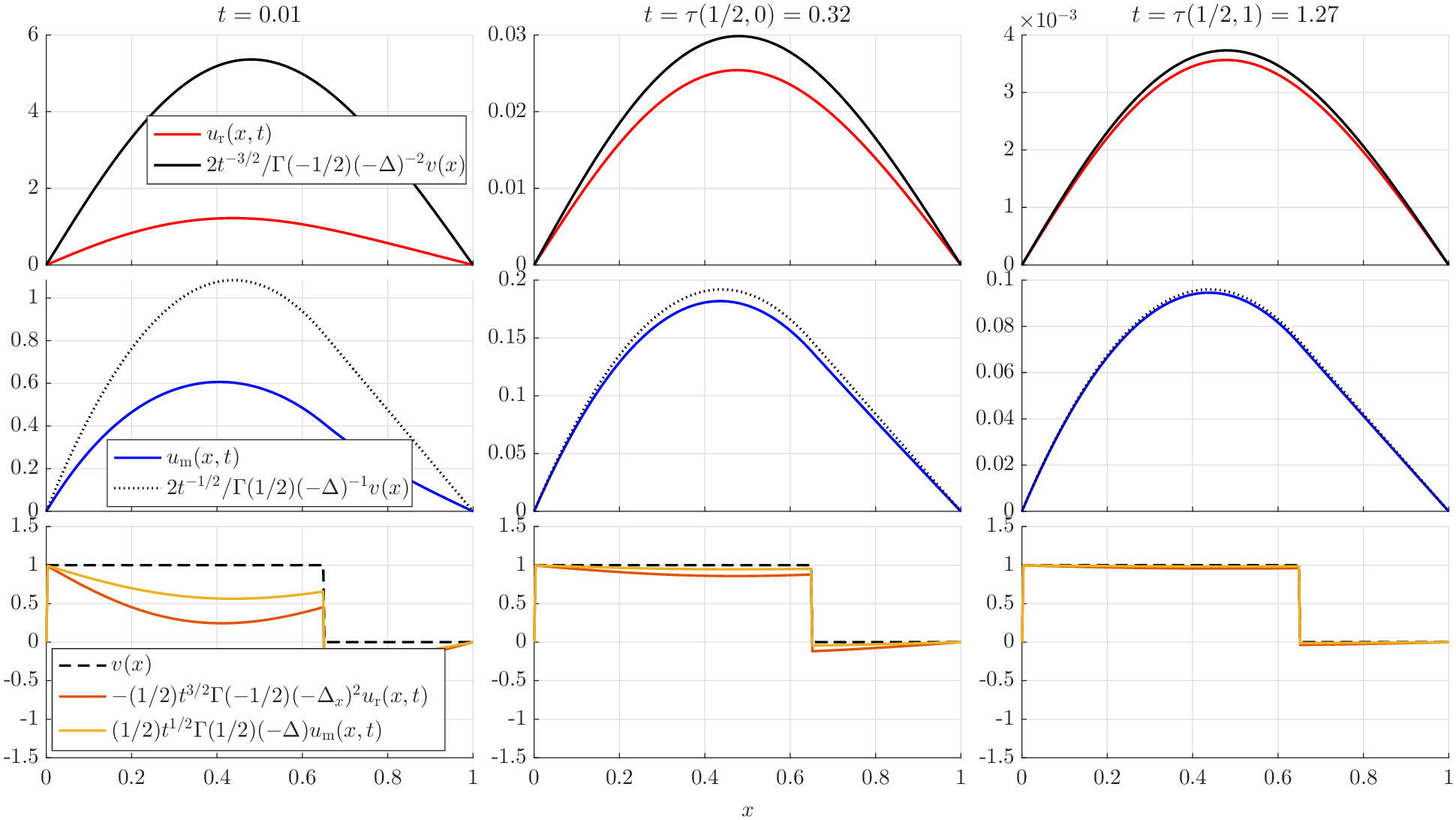}
    \caption{ 
    Temperature distribution along a comb at three different times: an early time ($t=0.01$) and the characteristic times for Caputo and Riemann-Liouville subdiffusion of order $1/2$ ($\tau(1/2,1) = 0.318 $ and $\tau(1/2,0) = 1.273$, respectively).
    The top row of plots shows the temperature $u_{\text{r}}(x,t)$ along the spine together with the asymptotic approximation given in \cref{eq:comb_rl_asymptotic} at these moments.
     The plots in the middle row show the temperature $u_{\text{m}}(x,t)$ marginalized over the ribs, and its asymptotic approximation \cref{eq:comb_caputo_asymptotic} for these three times. The bottom row shows the initial data reconstructed from $u_{\text {r}}$ and $u_{\text {m}}$ measured at these three times, as given in \cref{eq:comb_recon_RL,eq:comb_recon_caputo}.  Note that the scale on the $y$ axis is \textit{not} consistent between columns.
    }
    \label{fig:comb_plots}
\end{figure}

\section{Conclusion} \label{section:conclusion}
In this work, we demonstrate that solutions to the time-fractional diffusion equation,
\(
D^\alpha_tu - \Delta u=0,
\)
in subdiffusive and superdiffusive regimes, 
experience a 'freeze-out', factoring into spatial and temporal components at long times. The static long-time spatial profile is the inverse Laplacian or (negative) inverse bi-Laplacian of the initial data for Caputo and Riemann-Liouville derivatives respectively, while the temporal factor takes the form $\frac{t^{-\alpha + \beta}}{\Gamma(-\alpha+\beta)}$. This factorization results in a simple and robust method for recovering initial data from solution values taken at asymptotic times.
We extend this factorized representation to more general convolution memory kernels, demonstrating that initial-data preservation is not limited to time-fractional diffusion but holds in general (provided the memory kernel does not decay to zero in finite time).

We develop characteristic timescales for this factorization of the solutions, $\tau(\alpha,\beta)$ and $T(\alpha,\beta)$, and show that spatial features corresponding to frequency $k$ 'freeze' at times proportional to $\text{k}^{-2/\alpha}$. 
We show that the timescale $T(\alpha,\beta)$ is related to the number of real zeros $n_{\text{zeros}}$ of the Mittag-Leffler function, which permits the approximation $n_{\text{zeros}} \approx 2\frac{T(\alpha,\beta) \sin(\pi/\alpha)}{2 \pi}$.
We apply the results to diffusion on a comb, where restriction of transport to the fractal-like structure leads to both Caputo and Riemann-Liouville subdiffusion of order $\alpha=1/2.$

\bigskip
\noindent
{\bf {\large Acknowledgments}}
The authors acknowledge support from the 
Division of Mathematical Sciences at the US National Science 
Foundation (NSF) through Grants 
DMS-2111117 and DMS-2136198.




\bibliographystyle{cas-model2-names}

\bibliography{Fractional_Calculus.bib}

@article{abooaliComprehensiveStudyZeros2024,
  title = {A Comprehensive Study on the Zeros of the Two-Parameter {{Mittag-Leffler}} Function},
  author = {Abooali, Farnoosh and Jodayree Akbarfam, Aliasghar},
  year = 2024,
  month = dec,
  journal = {Sahand Communications in Mathematical Analysis},
  number = {Online First},
  publisher = {University of Maragheh},
  address = {IR},
  doi = {10.22130/scma.2024.2030271.1741},
  urldate = {2026-05-13},
  langid = {english}
}

@article{al-refaiGeneralisingFractionalCalculus2023,
  title = {Generalising the Fractional Calculus with {{Sonine}} Kernels via Conjugations},
  author = {{Al-Refai}, Mohammed and Fernandez, Arran},
  year = 2023,
  month = aug,
  journal = {Journal of Computational and Applied Mathematics},
  volume = {427},
  pages = {115159},
  issn = {03770427},
  doi = {10.1016/j.cam.2023.115159},
  urldate = {2026-04-10},
  langid = {english}
}

@article{aleroevFractionalSturmLiouvilleProblem2025,
  title = {The Fractional {{Sturm-Liouville}} Problem with the {{Caputo}} Derivative May Lose the Principal Eigenvalue},
  author = {Aleroev, Temirkhan S. and Li, Yulong},
  year = 2025,
  month = aug,
  journal = {Fractional Calculus and Applied Analysis},
  volume = {28},
  number = {4},
  pages = {1706--1716},
  issn = {1311-0454, 1314-2224},
  doi = {10.1007/s13540-025-00429-x},
  urldate = {2026-04-19},
  langid = {english}
}

@article{alimovBackwardProblemsTime2021,
  title = {On the Backward Problems in Time for Time-Fractional Subdiffusion Equations},
  author = {Alimov, Shavkat and Ashurov, Ravshan},
  year = 2021,
  journal = {Fractional Differential Calculus},
  volume = {11},
  number = {2},
  pages = {203--217},
  issn = {1847-9677},
  doi = {10.7153/fdc-2021-11-14},
  urldate = {2026-05-21},
  langid = {english}
}

@article{arkhincheevAnomalousDiffusionCharge2000,
  title = {Anomalous Diffusion and Charge Relaxation on Comb Model: Exact Solutions},
  shorttitle = {Anomalous Diffusion and Charge Relaxation on Comb Model},
  author = {Arkhincheev, V. E},
  year = 2000,
  month = jun,
  journal = {Physica A: Statistical Mechanics and its Applications},
  volume = {280},
  number = {3},
  pages = {304--314},
  issn = {0378-4371},
  doi = {10.1016/S0378-4371(99)00593-2},
  urldate = {2026-04-05}
}

@article{arkhincheevAnomalousDiffusionDrift1991,
  title = {Anomalous Diffusion and Drift in a Comb Model of Percolation Clusters},
  author = {Arkhincheev, V. E. and Baskin, E. and Tybulewicz, A.},
  year = 1991,
  journal = {Journal of Experimental and Theoretical Physics},
  urldate = {2026-05-20}
}

@article{armstrongAnomalousDiffusionFractal2025,
  title = {Anomalous Diffusion by Fractal Homogenization},
  author = {Armstrong, Scott and Vicol, Vlad},
  year = 2025,
  month = jun,
  journal = {Annals of PDE},
  volume = {11},
  number = {1},
  pages = {2},
  issn = {2524-5317, 2199-2576},
  doi = {10.1007/s40818-024-00189-6},
  urldate = {2026-08-13},
  langid = {english}
}

@article{bakuninMultiscalePercolationScaling2004,
  title = {Multi-Scale Percolation and Scaling Laws for Anisotropic Turbulent Diffusion},
  author = {Bakunin, O.G. and Schep, T.J.},
  year = 2004,
  month = feb,
  journal = {Physics Letters A},
  volume = {322},
  number = {1-2},
  pages = {105--110},
  issn = {03759601},
  doi = {10.1016/j.physleta.2003.10.082},
  urldate = {2026-08-04},
  langid = {english}
}

@article{benarousMultiscaleHomogenizationBounded2003,
  title = {Multiscale Homogenization with Bounded Ratios and Anomalous Slow Diffusion},
  author = {Ben Arous, G{\'e}rard and Owhadi, Houman},
  year = 2003,
  month = jan,
  journal = {Communications on Pure and Applied Mathematics},
  volume = {56},
  number = {1},
  pages = {80--113},
  issn = {0010-3640, 1097-0312},
  doi = {10.1002/cpa.10053},
  urldate = {2026-08-04},
  langid = {english}
}

@article{chengAsymptoticBehaviorSolutions2017,
  title = {Asymptotic Behavior of Solutions to Space-Time Fractional Diffusion Equations},
  author = {Cheng, Xing and Li, Zhiyuan and Yamamoto, Masahiro},
  year = 2017,
  month = mar,
  journal = {Mathematical Methods in the Applied Sciences},
  volume = {40},
  number = {4},
  eprint = {1505.06965},
  primaryclass = {math},
  pages = {1019--1031},
  issn = {0170-4214, 1099-1476},
  doi = {10.1002/mma.4033},
  urldate = {2026-04-05},
  archiveprefix = {arXiv}
}

@misc{cvetkoConvolutiontosumIdentitiesMittagLeffler2026,
  title = {Convolution-to-Sum Identities for {{Mittag-Leffler}} Type Functions},
  author = {Cvetko, William and Cherkaev, Elena},
  year = 2026,
  month = may,
  number = {arXiv:2605.01079},
  eprint = {2605.01079},
  primaryclass = {math},
  publisher = {arXiv},
  doi = {10.48550/arXiv.2605.01079},
  urldate = {2026-05-09},
  archiveprefix = {arXiv}
}

@article{eloeExistenceComparisonMonotonicity2026,
  title = {Existence, Comparison, and Monotonicity of Principal Eigenvalues of Fractional {{Sturm-Liouville}} Problems Involving {{Riemann-Liouville}} Derivatives},
  author = {Eloe, Paul W. and Li, Yulong},
  year = 2026,
  month = feb,
  journal = {Fractional Calculus and Applied Analysis},
  volume = {29},
  number = {1},
  pages = {1--25},
  issn = {1311-0454, 1314-2224},
  doi = {10.1007/s13540-026-00481-1},
  urldate = {2026-04-05},
  langid = {english}
}

@incollection{erdelyiMiscellaniousFunctions1955,
  title = {Miscellanious {{Functions}}},
  booktitle = {Higher {{Transcendental Functions}}},
  author = {Erdelyi, Arthur and Bateman, Harry and Magnus, Wilhelm and Oberhettinger, Fritz and Tricomi, Francesco G. and Project, Bateman Manuscript},
  year = 1955,
  volume = {3},
  pages = {206--227},
  publisher = {McGraw-Hill},
  address = {New York},
  urldate = {2026-04-05},
  langid = {english}
}

@article{floridiaBackwardProblemsTime2020,
  title = {Backward Problems in Time for Fractional Diffusion-Wave Equation},
  author = {Floridia, G and Yamamoto, M},
  year = 2020,
  month = dec,
  journal = {Inverse Problems},
  volume = {36},
  number = {12},
  pages = {125016},
  issn = {0266-5611, 1361-6420},
  doi = {10.1088/1361-6420/abbc5e},
  urldate = {2026-05-21}
}

@article{floridiaWellposednessBackwardProblems2020,
  title = {Well-Posedness for the Backward Problems in Time for General Time-Fractional Diffusion Equation},
  author = {Floridia, Giuseppe and Li, Zhiyuan and Yamamoto, Masahiro},
  year = 2020,
  month = oct,
  journal = {Rendiconti Lincei, Matematica e Applicazioni},
  volume = {31},
  number = {3},
  pages = {593--610},
  issn = {1120-6330, 1720-0768},
  doi = {10.4171/rlm/906},
  urldate = {2026-05-21}
}

@article{garrappaNumericalEvaluationTwo2015,
  title = {Numerical Evaluation of Two and Three Parameter {{Mittag-Leffler}} Functions},
  author = {Garrappa, Roberto},
  year = 2015,
  month = jan,
  journal = {SIAM Journal on Numerical Analysis},
  volume = {53},
  number = {3},
  pages = {1350--1369},
  issn = {0036-1429, 1095-7170},
  doi = {10.1137/140971191},
  urldate = {2026-04-05},
  langid = {english}
}

@misc{gorenfloFractionalCalculusIntegral2008,
  title = {Fractional Calculus: Integral and Differential Equations of Fractional Order},
  shorttitle = {Fractional {{Calculus}}},
  author = {Gorenflo, Rudolf and Mainardi, Francesco},
  year = 2008,
  month = may,
  number = {arXiv:0805.3823},
  eprint = {0805.3823},
  primaryclass = {math-ph},
  publisher = {arXiv},
  doi = {10.48550/arXiv.0805.3823},
  urldate = {2026-04-20},
  archiveprefix = {arXiv}
}

@book{gorenfloMittagLefflerFunctionsRelated2014,
  title = {Mittag-{{Leffler Functions}}, {{Related Topics}} and {{Applications}}: {{Theory}} and {{Applications}}},
  shorttitle = {Mittag-{{Leffler Functions}}, {{Related Topics}} and {{Applications}}},
  author = {Gorenflo, Rudolf and Kilbas, Anatoly A. and Mainardi, Francesco and Rogosin, Sergei V.},
  year = 2014,
  series = {Springer {{Monographs}} in {{Mathematics}}},
  publisher = {Springer Berlin Heidelberg},
  address = {Berlin, Heidelberg},
  doi = {10.1007/978-3-662-43930-2},
  urldate = {2026-04-19},
  copyright = {https://www.springernature.com/gp/researchers/text-and-data-mining},
  isbn = {978-3-662-43929-6 978-3-662-43930-2},
  langid = {english}
}

@article{gorenfloRecentAdvancesTheory2009,
  title = {Some Recent Advances in Theory and Simulation of Fractional Diffusion Processes},
  author = {Gorenflo, Rudolf and Mainardi, Francesco},
  year = 2009,
  month = jul,
  journal = {Journal of Computational and Applied Mathematics},
  volume = {229},
  number = {2},
  pages = {400--415},
  issn = {03770427},
  doi = {10.1016/j.cam.2008.04.005},
  urldate = {2026-05-21},
  copyright = {https://www.elsevier.com/tdm/userlicense/1.0/},
  langid = {english}
}

@article{grigolettoLinearFractionalDifferential2018,
  title = {Linear Fractional Differential Equations and Eigenfunctions of Fractional Differential Operators},
  author = {Grigoletto, Eliana Contharteze and De Oliveira, Edmundo Capelas and De Figueiredo Camargo, Rubens},
  year = 2018,
  month = may,
  journal = {Computational and Applied Mathematics},
  volume = {37},
  number = {2},
  pages = {1012--1026},
  issn = {0101-8205, 1807-0302},
  doi = {10.1007/s40314-016-0381-1},
  urldate = {2026-04-05},
  langid = {english}
}

@incollection{hannekenEnumerationRealZeros2007,
  title = {Enumeration of the Real Zeros of the {{Mittag-Leffler}} Function {{E$\alpha$}}(z), 1 {$<\alpha<$} 2},
  booktitle = {Advances in {{Fractional Calculus}}},
  author = {Hanneken, John W. and Vaught, David M. and Achar, B. N. Narahari},
  editor = {Sabatier, Jocelyn and Agrawal, Om Prakash and Machado, J. A. Tenreiro},
  year = 2007,
  pages = {15--26},
  publisher = {Springer Netherlands},
  address = {Dordrecht},
  doi = {10.1007/978-1-4020-6042-7_2},
  urldate = {2026-04-05},
  isbn = {978-1-4020-6041-0 978-1-4020-6042-7},
  langid = {english}
}

@article{hauboldMittagLefflerFunctionsTheir2011,
  title = {Mittag-{{Leffler}} Functions and Their Applications},
  author = {Haubold, H. J. and Mathai, A. M. and Saxena, R. K.},
  editor = {Tsitouras, Ch},
  year = 2011,
  month = jan,
  journal = {Journal of Applied Mathematics},
  volume = {2011},
  number = {1},
  pages = {298628},
  issn = {1110-757X, 1687-0042},
  doi = {10.1155/2011/298628},
  urldate = {2026-05-21},
  langid = {english}
}

@article{heymansPhysicalInterpretationInitial2006,
  title = {Physical Interpretation of Initial Conditions for Fractional Differential Equations with {{Riemann-Liouville}} Fractional Derivatives},
  author = {Heymans, Nicole and Podlubny, Igor},
  year = 2006,
  month = jun,
  journal = {Rheologica Acta},
  volume = {45},
  number = {5},
  pages = {765--771},
  issn = {0035-4511, 1435-1528},
  doi = {10.1007/s00397-005-0043-5},
  urldate = {2026-04-05},
  copyright = {http://www.springer.com/tdm},
  langid = {english}
}

@article{hilferFractionalDiffusionBased2000,
  title = {Fractional Diffusion Based on {{Riemann-Liouville}} Fractional Derivatives},
  author = {Hilfer, R.},
  year = 2000,
  month = apr,
  journal = {The Journal of Physical Chemistry B},
  volume = {104},
  number = {16},
  pages = {3914--3917},
  issn = {1520-6106, 1520-5207},
  doi = {10.1021/jp9936289},
  urldate = {2026-06-16},
  langid = {english}
}

@book{iominFractionalDynamicsComblike2018,
  title = {Fractional {{Dynamics}} in {{Comb-like Structures}}},
  author = {Iomin, Alexander and M{\'e}ndez, Vicen{\c c} and Horsthemke, Werner},
  year = 2018,
  month = oct,
  publisher = {World Scientific},
  doi = {10.1142/11076},
  urldate = {2026-04-05},
  isbn = {978-981-327-343-6 978-981-327-344-3},
  langid = {english}
}

@article{jinTutorialInverseProblems2015,
  title = {A Tutorial on Inverse Problems for Anomalous Diffusion Processes},
  author = {Jin, Bangti and Rundell, William},
  year = 2015,
  month = mar,
  journal = {Inverse Problems},
  volume = {31},
  number = {3},
  eprint = {1501.00251},
  primaryclass = {math},
  pages = {035003},
  issn = {0266-5611, 1361-6420},
  doi = {10.1088/0266-5611/31/3/035003},
  urldate = {2026-04-20},
  archiveprefix = {arXiv}
}

@misc{kemppainenDecayEstimatesTimefractional2014,
  title = {Decay Estimates for Time-Fractional and Other Non-Local in Time Subdiffusion Equations in $\mathbb{R}^d$},
  author = {Kemppainen, Jukka and Siljander, Juhana and Vergara, Vicente and Zacher, Rico},
  year = 2014,
  month = mar,
  number = {arXiv:1403.1737},
  eprint = {1403.1737},
  primaryclass = {math},
  publisher = {arXiv},
  doi = {10.48550/arXiv.1403.1737},
  urldate = {2026-04-05},
  archiveprefix = {arXiv}
}

@book{kilbasTheoryApplicationsFractional2006,
  title = {Theory and Applications of Fractional Differential Equations},
  author = {Kilbas, Anatoly A.},
  year = 2006,
  series = {North-{{Holland}} Mathematics Studies},
  number = {204},
  publisher = {Elsevier},
  address = {Boston},
  collaborator = {Srivastava, Hari M. and Trujillo, Juan J.},
  isbn = {978-0-444-51832-3 978-0-08-046207-3},
  langid = {english}
}

@article{kimAsymptoticBehaviorsFundamental2016,
  title = {Asymptotic Behaviors of Fundamental Solution and Its Derivatives Related to Space-Time Fractional Differential Equations},
  author = {Kim, Kyeong-Hun and Lim, Sungbin},
  year = 2016,
  month = jul,
  journal = {Journal of the Korean Mathematical Society},
  volume = {53},
  number = {4},
  pages = {929--967},
  publisher = {대한수학회},
  doi = {10.4134/JKMS.J150343},
  urldate = {2026-05-21}
}

@article{liAsymptoticsSolutionsSuperdiffusion2023,
  title = {On Asymptotics of Solutions for Superdiffusion and Subdiffusion Equations with the {{Riemann-Liouville}} Fractional Derivative},
  author = {Li, Zhiqiang and Fan, Yanzhe and Li, Zhiqiang and Fan, Yanzhe},
  year = 2023,
  journal = {AIMS Mathematics},
  volume = {8},
  number = {8},
  pages = {19210--19239},
  issn = {2473-6988},
  doi = {10.3934/math.2023980},
  urldate = {2026-04-05},
  copyright = {2023 The Author(s)},
  langid = {english}
}

@article{liInitialboundaryValueProblems2023,
  title = {Initial-Boundary Value Problems for Coupled Systems of Time-Fractional Diffusion Equations},
  author = {Li, Zhiyuan and Huang, Xinchi and Liu, Yikan},
  year = 2023,
  month = apr,
  journal = {Fractional Calculus and Applied Analysis},
  volume = {26},
  number = {2},
  pages = {533--566},
  issn = {1311-0454, 1314-2224},
  doi = {10.1007/s13540-023-00149-0},
  urldate = {2026-04-05},
  langid = {english}
}

@article{liuBackwardProblemTimefractional2010,
  title = {A Backward Problem for the Time-Fractional Diffusion Equation},
  author = {Liu, J.J. and Yamamoto, M.},
  year = 2010,
  month = nov,
  journal = {Applicable Analysis},
  volume = {89},
  number = {11},
  pages = {1769--1788},
  issn = {0003-6811, 1563-504X},
  doi = {10.1080/00036810903479731},
  urldate = {2026-04-20},
  langid = {english}
}

@article{lorenzoGeneralizedFunctionsFractional2008,
  title = {Generalized Functions for the Fractional Calculus},
  author = {Lorenzo, Carl F. and Hartley, Tom T.},
  year = 2008,
  journal = {Critical Reviews in Biomedical Engineering},
  volume = {36},
  number = {1},
  pages = {39--55},
  issn = {0278-940X},
  doi = {10.1615/CritRevBiomedEng.v36.i1.40},
  urldate = {2026-05-13},
  langid = {english}
}

@article{maAsymptoticsSolutionsAnomalous2013,
  title = {The Asymptotics of the Solutions to the Anomalous Diffusion Equations},
  author = {Ma, Yutian and Zhang, Fengrong and Li, Changpin},
  year = 2013,
  month = sep,
  journal = {Computers \& Mathematics with Applications},
  series = {Fractional {{Differentiation}} and Its {{Applications}}},
  volume = {66},
  number = {5},
  pages = {682--692},
  issn = {0898-1221},
  doi = {10.1016/j.camwa.2013.01.032},
  urldate = {2026-04-05}
}

@book{mainardiFractionalCalculusWaves2010,
  title = {Fractional Calculus and Waves in Linear Viscoelasticity: An Introduction to Mathematical Models},
  shorttitle = {Fractional Calculus and Waves in Linear Viscoelasticity},
  author = {Mainardi, F.},
  year = 2010,
  publisher = {Imperial College Press},
  address = {London ; Hackensack, NJ},
  isbn = {978-1-84816-329-4},
  langid = {english},
  lccn = {QA314 .M35 2010}
}

@article{mainardiFractionalRelaxationoscillationFractional1996,
  title = {Fractional Relaxation-Oscillation and Fractional Diffusion-Wave Phenomena},
  author = {Mainardi, Francesco},
  year = 1996,
  month = sep,
  journal = {Chaos, Solitons \& Fractals},
  volume = {7},
  number = {9},
  pages = {1461--1477},
  issn = {09600779},
  doi = {10.1016/0960-0779(95)00125-5},
  urldate = {2026-04-05},
  langid = {english}
}

@article{mainardiMittagLefflertypeFunctionsFractional2000,
  title = {On {{Mittag-Leffler-type}} Functions in Fractional Evolution Processes},
  author = {Mainardi, Francesco and Gorenflo, Rudolf},
  year = 2000,
  month = jun,
  journal = {Journal of Computational and Applied Mathematics},
  volume = {118},
  number = {1-2},
  pages = {283--299},
  issn = {03770427},
  doi = {10.1016/S0377-0427(00)00294-6},
  urldate = {2026-04-05},
  copyright = {https://www.elsevier.com/tdm/userlicense/1.0/},
  langid = {english}
}

@article{mainardiPropertiesMittagLefflerFunction2014,
  title = {On some properties of the {Mittag-Leffler} function ${E_\alpha(-t^\alpha)}$, completely monotone for ${t> 0}$ with ${0<\alpha<1}$},
  author = {Mainardi, Francesco},
  year = 2014,
  journal = {Discrete and Continuous Dynamical Systems - B},
  volume = {19},
  number = {7},
  pages = {2267-2278},
  issn = {1531-3492, 1553-524X},
  doi = {10.3934/dcdsb.2014.19.2267},
  urldate = {2026-04-05}
}

@article{mainardiWhyMittagLefflerFunction2020,
  title = {Why the {{Mittag-Leffler}} Function Can Be Considered the Queen Function of the Fractional Calculus?},
  author = {Mainardi, Francesco},
  year = 2020,
  month = nov,
  journal = {Entropy},
  volume = {22},
  number = {12},
  pages = {1359},
  issn = {1099-4300},
  doi = {10.3390/e22121359},
  urldate = {2026-04-05},
  langid = {english}
}

@article{mendezComblikeModelsTransport2013,
  title = {Comb-like Models for Transport along Spiny Dendrites},
  author = {M{\'e}ndez, Vicen{\c c} and Iomin, Alexander},
  year = 2013,
  month = aug,
  journal = {Chaos, Solitons \& Fractals},
  volume = {53},
  pages = {46--51},
  issn = {09600779},
  doi = {10.1016/j.chaos.2013.05.002},
  urldate = {2026-08-04},
  langid = {english}
}

@article{metzlerRandomWalksGuide2000,
  title = {The Random Walk's Guide to Anomalous Diffusion: A Fractional Dynamics Approach},
  shorttitle = {The Random Walk's Guide to Anomalous Diffusion},
  author = {Metzler, Ralf and Klafter, Joseph},
  year = 2000,
  month = dec,
  journal = {Physics Reports},
  volume = {339},
  number = {1},
  pages = {1--77},
  issn = {03701573},
  doi = {10.1016/S0370-1573(00)00070-3},
  urldate = {2026-04-05},
  copyright = {https://www.elsevier.com/tdm/userlicense/1.0/},
  langid = {english}
}

@misc{mieghemMittagLefflerFunction2021,
  title = {The {{Mittag-Leffler}} Function},
  author = {Mieghem, Piet Van},
  year = 2021,
  month = sep,
  number = {arXiv:2005.13330},
  eprint = {2005.13330},
  primaryclass = {math},
  publisher = {arXiv},
  doi = {10.48550/arXiv.2005.13330},
  urldate = {2026-04-11},
  archiveprefix = {arXiv},
  langid = {english}
}

@article{milovanovTurbulenceSpreadingAnomalous2025,
  title = {Turbulence Spreading and Anomalous Diffusion on Combs},
  author = {Milovanov, Alexander V. and Iomin, Alexander and Rasmussen, Jens Juul},
  year = 2025,
  month = jun,
  journal = {Physical Review E},
  volume = {111},
  number = {6},
  pages = {064217},
  issn = {2470-0045, 2470-0053},
  doi = {10.1103/cmf5-sf8x},
  urldate = {2026-08-04},
  langid = {english}
}

@book{parisAsymptoticsMellinBarnesIntegrals2001,
  title = {Asymptotics and {{Mellin-Barnes Integrals}}},
  author = {Paris, R. B. and Kaminski, D.},
  year = 2001,
  series = {Encyclopedia of {{Mathematics}} and Its {{Applications}}},
  publisher = {Cambridge University Press},
  address = {Cambridge},
  doi = {10.1017/CBO9780511546662},
  urldate = {2026-04-05},
  isbn = {978-0-521-79001-7}
}

@article{raghavanFractionalDerivativesApplication2011,
  title = {Fractional Derivatives: {{Application}} to Transient Flow},
  shorttitle = {Fractional Derivatives},
  author = {Raghavan, R.},
  year = 2011,
  month = dec,
  journal = {Journal of Petroleum Science and Engineering},
  volume = {80},
  number = {1},
  pages = {7--13},
  issn = {09204105},
  doi = {10.1016/j.petrol.2011.10.003},
  urldate = {2026-08-04},
  langid = {english}
}

@article{sakamotoInitialValueBoundary2011,
  title = {Initial Value/Boundary Value Problems for Fractional Diffusion-Wave Equations and Applications to Some Inverse Problems},
  author = {Sakamoto, Kenichi and Yamamoto, Masahiro},
  year = 2011,
  month = oct,
  journal = {Journal of Mathematical Analysis and Applications},
  volume = {382},
  number = {1},
  pages = {426--447},
  issn = {0022247X},
  doi = {10.1016/j.jmaa.2011.04.058},
  urldate = {2026-04-20},
  copyright = {https://www.elsevier.com/tdm/userlicense/1.0/},
  langid = {english}
}

@article{sandevGeneralizedDiffusionwaveEquation2019,
  title = {Generalized Diffusion-Wave Equation with Memory Kernel},
  author = {Sandev, Trifce and Tomovski, Zivorad and Dubbeldam, Johan L A and Chechkin, Aleksei},
  year = 2019,
  month = jan,
  journal = {Journal of Physics A: Mathematical and Theoretical},
  volume = {52},
  number = {1},
  pages = {015201},
  issn = {1751-8113, 1751-8121},
  doi = {10.1088/1751-8121/aaefa3},
  urldate = {2026-04-10},
  langid = {english}
}

@article{sandevHeterogeneousDiffusionComb2018,
  title = {Heterogeneous Diffusion in Comb and Fractal Grid Structures},
  author = {Sandev, Trifce and Schulz, Alexander and Kantz, Holger and Iomin, Alexander},
  year = 2018,
  month = sep,
  journal = {Chaos, Solitons \& Fractals},
  volume = {114},
  pages = {551--555},
  issn = {09600779},
  doi = {10.1016/j.chaos.2017.04.041},
  urldate = {2026-08-12},
  langid = {english}
}

@article{zhangNumericalAnalysisBackward2020,
  title = {Numerical Analysis of Backward Subdiffusion Problems},
  author = {Zhang, Zhengqi and Zhou, Zhi},
  year = 2020,
  month = oct,
  journal = {Inverse Problems},
  volume = {36},
  number = {10},
  pages = {105006},
  issn = {0266-5611, 1361-6420},
  doi = {10.1088/1361-6420/abaf3d},
  urldate = {2026-05-13}
}



\end{document}